\title{Skew Randi\'c Matrix and Skew Randi\'c Energy\footnote{Supported by NSFC No.11371205 and
PCSIRT.} }
\author{\small{ Ran Gu, Fei Huang, Xueliang Li }\\
{\small  Center for Combinatorics and LPMC-TJKLC}\\
{\small Nankai University, Tianjin 300071, P.R. China}\\
{\small Email: guran323@163.com, huangfei06@126.com, lxl@nankai.edu.cn}}
\date{}
\begin{document}

\makeatletter
  \newcommand\figcaption{\def\@captype{figure}\caption}
  \newcommand\tabcaption{\def\@captype{table}\caption}
\makeatother
\newtheorem{pro}{Proposition}[section]
\newtheorem{defn}{Definition}[section]
\newtheorem{theorem}{Theorem}[section]
\newtheorem{lemma}[theorem]{Lemma}
\newtheorem{coro}[theorem]{Corollary}

\maketitle

\begin{abstract}
Let $G$ be a simple graph with an orientation $\sigma$, which
assigns to each edge a direction so that $G^\sigma$ becomes a
directed graph. $G$ is said to be the underlying graph of the
directed graph $G^\sigma$. In this paper, we define a weighted skew
adjacency matrix with Rand\'c weight, the skew Randi\'c matrix ${\bf
R_S}(G^\sigma)$, of $G^\sigma$ as the real skew symmetric matrix
$[(r_s)_{ij}]$ where $(r_s)_{ij} = (d_id_j)^{-\frac{1}{2}}$ and
$(r_s)_{ji} = -(d_id_j)^{-\frac{1}{2}}$ if $v_i \rightarrow v_j$ is
an arc of $G^\sigma$, otherwise $(r_s)_{ij} = (r_s)_{ji} = 0$. We
derive some properties of the skew Randi\'c energy of an oriented
graph. Most properties are similar to those for the skew energy of
oriented graphs. But, surprisingly, the extremal oriented graphs
with maximum or minimum skew Randi\'c energy are completely
different.
\\[2mm]

\noindent{\bf Keywords:} oriented graph, skew Randi\'c matrix, skew
Randi\'c energy

\noindent{\bf AMS subject classification 2010:} 05C50, 15A18, 92E10

\end{abstract}

\section{Introduction}

In this paper we are concerned with simple finite graphs. Undefined
notation and terminology can be found in \cite{Bondy}. Let $G$ be a
simple graph with vertex set $V(G) = \{v_1, v_2,\ldots, v_n\}$, and
let $d_i$ be the degree of vertex $v_i$, $i = 1, 2, \cdots, n$. We
use $P_k$ to denote the path on $k$ vertices, and the \emph{length}
of a path is the number of edges that the path uses.

The Randi\'c index \cite{Randic} of $G$ is defined as the sum of
$\frac{1}{\sqrt{d_id_j}}$ over all edges $v_iv_j$ of $G$. This
topological index  was first proposed by Randi\'c \cite{Randic} in
1975 under the name ``branching index". In 1998, Bollob\'as and
Erd\H{o}s \cite{BE} generalized this index as $R_\alpha =
R_\alpha(G) =\sum _{i\sim j}(d_id_j)^\alpha$, called general
Randi\'c index.

Let $\textbf{A}(G)$ be the $(0,1)$-adjacency matrix of $G$. The
\emph{spectrum }$Sp(G)$ of $G$ is defined as the spectrum of
$\mathbf{A}(G)$. The Randi\'c matrix \cite{Gutman}
$\textbf{R}=\textbf{R}(G)$ of order $n$ can be viewed as a weighted
adjacency matrix, whose $(i,j)$-entry is defined as
\begin{equation*}
r_{ij}=
\left\{
  \begin{array}{ll}
   0 & \hbox{ if $i$ = $j$,} \\
    (d_id_j)^{-\frac{1}{2}} & \hbox{ if the vertices $v_i$ and $v_j$ of $G$ are adjacent}, \\
    0 & \hbox{ if the vertices $v_i$ and $v_j$ of $G$ are not adjacent.}
  \end{array}
\right.
\end{equation*}
The polynomial $\varphi_R(G,\lambda)=det(\lambda
\textbf{I}_n-\textbf{R})$ will be referred to as the
$R-$characteristic polynomial of $G$.  Here and later by
$\textbf{I}_n$ is denoted the unit matrix of order $n$.

The spectrum $Sp_{\textbf{R}}(G^\sigma)$ of $G$ is defined as the
spectrum of $\textbf{R}(G)$.  Denote the spectrum
$Sp_{\textbf{R}}(G^\sigma)$ of $G$ by $\{\lambda_1,\lambda_2,\ldots
,\lambda_n\}$ and label them in non-increasing order. The energy of
the Randi\'c matrix is defined as $RE=RE(G)=\sum_{i=1}^n|\lambda_i|$
and is called Randi\'c energy. There are other kinds of Randi\'c
type matrices and energy, for details  see \cite{GHL}, \cite{GHL2}.

Let $G$ be a simple graph with an orientation $\sigma$, which
assigns to each edge a direction so that $G^\sigma$ becomes a
directed graph. $G$ is said to be the underlying graph of the
directed graph $G^\sigma$. With respect to a labeling, the
skew-adjacency matrix $\textbf{S}(G^\sigma)$ is the real skew
symmetric matrix $[s_{ij} ]$ where $s_{ij} = 1$ and $s_{ji} = -1$ if
$v_i \rightarrow v_j$ is an arc of $G^\sigma$, otherwise $s_{ij} =
s_{ji} = 0$.

Now we define the {\it skew Randi\'c matrix}
$\textbf{R}_s=\textbf{R}_s(G^\sigma)$ of order $n$, whose
$(i,j)$-entry is
\begin{equation*}
(r_s)_{ij}=
\left\{
  \begin{array}{ll}
   (d_id_j)^{-\frac{1}{2}} & \hbox{ if $v_i \rightarrow v_j$,} \\
    -(d_id_j)^{-\frac{1}{2}} & \hbox{ if $v_j \rightarrow v_i$}, \\
    0 & \hbox{ Otherwise.}
  \end{array}
\right.
\end{equation*}

If $G$ does not possess isolated vertices, and $\sigma$ is an
orientation of $G$, then it is easy to check that
$$\textbf{R}_s(G^\sigma)=\textbf{D}^{-\frac{1}{2}}\textbf{S}(G^\sigma)
\textbf{D}^{-\frac{1}{2}},$$ where $\textbf{D}$ is the diagonal matrix of
vertex degrees.

The polynomial $\varphi_{R_s}(G,\lambda)= \mathrm{det}(\lambda
\textbf{I}_n-\textbf{R}_s)$ will be referred to as the
$R_s$-characteristic polynomial of $G^\sigma$. It is obvious that
$\textbf{R}_s(G^\sigma)$ is a real skew symmetric matrix. Hence the
eigenvalues $\{\rho_1,\rho_2,\ldots ,\rho_n\}$ of
$\textbf{R}_s(G^\sigma)$ are all purely imaginary numbers. The skew
Randi\'c spectrum $Sp_{\textbf{R}_s}(G^\sigma)$ of $G^\sigma$ is
defined as the spectrum of $\textbf{R}_s(G^\sigma)$.

The energy of $\textbf{R}_s(G^\sigma)$, called {\it skew Randi\'c
energy} which is defined as the sum of its singular values, is the
sum of the absolute values of its eigenvalues. If we denote the skew
Randi\'c energy of $G^\sigma$ by $RE_s(G^\sigma)$, then
$RE_s(G^\sigma) = \sum_{i=1}^n|\rho_i|$.

Note that the skew Randi\'c matrix ${\bf R_S}(G^\sigma)$ is a
weighted skew-adjacency matrix of $G^\sigma$ with the Rand\'c
weight. In this paper, we derive some properties of the skew
Randi\'c energy of an oriented graph. Most properties are similar to
those for the skew energy of an unweighted oriented graph. But,
surprisingly, the extremal oriented graphs with maximum or minimum
skew Randi\'c energy are completely different.

\section{Basic properties }

The following proposition on the skew Randi\'c spectra of oriented graphs is obvious.
\begin{pro}\label{pro1}
Let $\{i\mu_1, i\mu_2, \ldots, i\mu_n\}$ be the skew Randi\'c
spectrum of $G^\sigma$, where $\mu_1\geq \mu_2\geq \ldots\geq
\mu_n$. Then (1) $\mu_j = -\mu_{n+1-j}$ for all $1 \leq j \leq n$;
(2) when $n$ is odd, $\mu_{(n+1)/2} = 0$ and when $n$ is even,
$\mu_{n/2}\geq 0$; and (3) $\sum_{i=1}^n \mu_i^2=2R_{-1}(G)$, where
$R_{-1}(G)$ is the general Randi\'c index of $G$ with $\alpha =-1$.
\end{pro}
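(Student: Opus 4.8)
The plan is to exploit the fact that $\textbf{R}_s(G^\sigma)$ is a real skew-symmetric matrix, so that all three parts reduce to standard linear-algebra facts once they are phrased in terms of the $R_s$-characteristic polynomial and the trace of $\textbf{R}_s^{\,2}$.

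For part (1), I would begin with the chain of equalities
\[
\varphi_{R_s}(G,\lambda)=\det(\lambda\textbf{I}_n-\textbf{R}_s)=\det\!\big((\lambda\textbf{I}_n-\textbf{R}_s)^{\mathsf{T}}\big)=\det(\lambda\textbf{I}_n+\textbf{R}_s)=(-1)^n\,\varphi_{R_s}(G,-\lambda),
\]
which uses only $\textbf{R}_s^{\mathsf{T}}=-\textbf{R}_s$ and $\det(A)=\det(A^{\mathsf{T}})$. Hence the multiset of eigenvalues of $\textbf{R}_s$ is invariant under $\rho\mapsto-\rho$: if $i\mu$ occurs with multiplicity $m$, then so does $-i\mu$. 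Listing the eigenvalues so that $\mu_1\geq\mu_2\geq\cdots\geq\mu_n$, this symmetry is precisely the statement $\mu_j=-\mu_{n+1-j}$ for every $j$.

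For part (2), the pairing from (1) shows that the nonzero $\mu_j$ cancel off in pairs $\pm\mu$, so the multiplicity of the eigenvalue $0$ has the same parity as $n$. If $n$ is odd this multiplicity is odd, hence positive, and because the vanishing eigenvalues form the central block under our ordering, the middle entry $\mu_{(n+1)/2}$ is among them, giving $\mu_{(n+1)/2}=0$. If $n$ is even, the central pair obeys $\mu_{n/2}=-\mu_{n/2+1}$ and $\mu_{n/2}\geq\mu_{n/2+1}$ simultaneously, whence $\mu_{n/2}\geq0$. For part (3), I would evaluate $\sum_{j=1}^n(i\mu_j)^2=\mathrm{tr}\!\big(\textbf{R}_s^{\,2}\big)$, that is, $-\sum_{j=1}^n\mu_j^2=\mathrm{tr}\!\big(\textbf{R}_s^{\,2}\big)=\sum_{i,j}(r_s)_{ij}(r_s)_{ji}=-\sum_{i,j}(r_s)_{ij}^2$, the last step by skew-symmetry. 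Each edge $v_iv_j$ of $G$ contributes the value $(d_id_j)^{-1}$ to exactly the two entries $(r_s)_{ij}^2$ and $(r_s)_{ji}^2$, so $\sum_{i,j}(r_s)_{ij}^2=2\sum_{v_iv_j\in E(G)}(d_id_j)^{-1}=2R_{-1}(G)$, and cancelling the common minus sign yields $\sum_{j=1}^n\mu_j^2=2R_{-1}(G)$.

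None of these steps presents a real obstacle; the only point needing attention is the sign bookkeeping in part (3), where a minus sign appears on the spectral side (because $(i\mu_j)^2=-\mu_j^2$) and another on the entrywise side (because $(r_s)_{ji}=-(r_s)_{ij}$), and one must check that the two cancel rather than compound.
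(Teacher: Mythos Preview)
Your argument is correct and entirely standard. The paper itself offers no proof of this proposition, declaring it ``obvious''; the only overlap is that your trace computation for part~(3) reappears essentially verbatim at the start of the paper's proof of Theorem~\ref{THB}.
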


Let $G$ be a graph. A \emph{linear subgraph} $L$ of $G$ is a
disjoint union of some edges and some cycles in $G$. Let
$\mathcal{L}_i(G)$  be the set of all linear subgraphs $L$ of $G$
with $i$ vertices. For a  linear subgraphs $L\in \mathcal{L}_i(G)$,
denote by $p_1(L)$ the number of components of size $2$ in $L$ and
$p_2(L)$ the number of cycles in $L$.

 Let \begin{equation}\label{1}
 \varphi_R(G,\lambda)=a_0\lambda^n+a_1\lambda^{n-1}+\dots+a_{n-1}\lambda+a_n \end{equation}
 be the $R-$characteristic polynomial of $G$. From \cite{DM}, we know that
\begin{equation}\label{ai}
a_i=\sum_{L\in \mathcal{L}_i}(-1)^{p_1(L)}(-2)^{p_2(L)}W(L),
\end{equation}
where $W(L)=\prod_{v\in V(L)} \frac{1}{d(v)}.$ If $G$ is bipartite, then $a_i=0$ for all odd $i$.

Now considering the oriented graph $G^\sigma$, let $C$ be an even
cycle of $G$. We say $C$ is \emph{evenly oriented relative to}
$G^\sigma$ if it has an even number of edges oriented in the
direction of the routing. Otherwise $C$ is oddly oriented.

We call a linear subgraph $L$ of $G$ \emph{evenly linear} if $L$
contains no cycle with odd length and denote by $\mathcal{EL}_i(G)$
(or $\mathcal{EL}_i$ for short) the set of all evenly linear
subgraphs of $G$ with $i$ vertices. For an evenly linear subgraph
$L\in \mathcal{EL}_i(G)$, we use $p_e(L)$ (resp., $p_o(L)$)  to
denote the number of evenly (resp., oddly) oriented  cycles  in $L$
relative to $G^\sigma$.

Consider $G^\sigma$ as a weighted oriented graph with each edge
$v_iv_j$ assigned the weight $\frac{1}{d(v_i)d(v_j)}$. Then the skew
Randi\'c characteristic polynomial of $G^\sigma$ equals to the skew
characteristic polynomial of weighted oriented graph $G^\sigma$.  In
\cite{GX}, the authors studied the skew characteristic polynomial of
weighted oriented graph. From their results, we can derive the skew
Randi\'c characteristic polynomial of an oriented graph $G^\sigma$
as follows.

\begin{theorem}
Let
 \begin{equation}\label{2}
 \varphi_{R_s}(G^\sigma,\lambda)=
\mathrm{det}(\lambda \textbf{I}_n-\textbf{R}_s) = c_0\lambda^n + c_1\lambda^{n-1} +\ldots+c_{n-1}\lambda+c_n
 \end{equation}
 be the $R_s$-characteristic polynomial of $G^\sigma$. Then
\begin{equation}\label{ci}
c_i=\sum_{L\in \mathcal{EL}_i}(-2)^{p_e(L)}2^{p_o(L)}W(L).
\end{equation}
In particular, we have
(i) $c_0 = 1$, (ii) $c_2 = R_{-1}(G)$, the general Randi\'c index with $\alpha =-1$ and (iii) $c_i = 0$ for all odd $i$.
\end{theorem}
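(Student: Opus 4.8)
The plan is to expand the determinant $\det(\lambda\mathbf{I}_n-\mathbf{R}_s)$ by the Leibniz formula and read off each coefficient combinatorially, exactly as one does for the ordinary skew characteristic polynomial but keeping track of the Randi\'c weights. Writing $\mathbf{R}_s=[(r_s)_{ij}]$, the coefficient $c_i$ of $\lambda^{n-i}$ is the sum, over all size-$i$ subsets $U\subseteq V(G)$ and all fixed-point-free permutations $\pi$ of $U$, of $\mathrm{sgn}(\pi)\,(-1)^i\prod_{v\in U}(r_s)_{v,\pi(v)}$. Since $\mathbf{R}_s$ has zero diagonal, every such $\pi$ factors into cyclic factors of length at least $2$, and a factor supported on a vertex set that carries no edge or cycle of $G$ kills the whole term; thus the nonzero $\pi$ are in bijection with pairs consisting of a linear subgraph $L$ of $G$ on $i$ vertices together with a choice of direction on each cycle of $L$.

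First I would dispose of odd cycles. If a cyclic factor of $\pi$ runs along an odd cycle $C$ of $G$, reversing its direction produces another admissible permutation with the same sign and the same product of magnitudes, but the sign of $\prod_{v\in V(C)}(r_s)_{v,\pi(v)}$ changes by $(-1)^{|C|}=-1$ because $(r_s)_{ba}=-(r_s)_{ab}$; hence these two terms cancel. Iterating, only permutations all of whose cyclic factors are even survive, i.e.\ only evenly linear subgraphs $L\in\mathcal{EL}_i$ contribute. In particular $\mathcal{EL}_i=\varnothing$ when $i$ is odd (an edge has $2$ vertices and an even cycle an even number of them), which gives (iii); and $c_0=1$ since the only linear subgraph on $0$ vertices is the empty one, with $W(\varnothing)=1$.

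Next I would evaluate the contribution of a fixed $L\in\mathcal{EL}_i$ with $q$ matching edges and $p=p_e(L)+p_o(L)$ even cycles. The $2^{p}$ permutations realizing $L$ each have $m=q+p$ cyclic factors, so $\mathrm{sgn}(\pi)=(-1)^{i-m}$, and combined with the global $(-1)^i$ this leaves the clean sign $(-1)^{m}=(-1)^{q+p}$. A matching edge $v_av_b$ contributes $(r_s)_{ab}(r_s)_{ba}=-(d_ad_b)^{-1}$. An even cycle $C$, in either of its two directions, contributes $\prod_{v\in V(C)}(r_s)_{v,\pi(v)}$, whose magnitude is $\prod_{v\in V(C)}d_v^{-1}$ (each vertex of $C$ lies on exactly two edges of $C$) and whose sign is $(-1)^{\#\{\text{arcs of }C\text{ opposite to the chosen traversal direction}\}}$, which for an even cycle equals $+1$ precisely when $C$ is evenly oriented and $-1$ otherwise, independently of the direction chosen. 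Multiplying the $q$ edge factors, the $p$ cycle factors (each of the $2^p$ direction choices giving the same value), $W(L)=\prod_{v\in V(L)}d_v^{-1}$, and the sign $(-1)^{q+p}$, the $(-1)^q$ factors cancel and the remaining sign reorganizes as $(-2)^{p_e(L)}2^{p_o(L)}$, yielding (\ref{ci}). Specializing to $i=2$, where $L$ ranges over single edges $v_iv_j$ with $p_e=p_o=0$ and $W(L)=(d_id_j)^{-1}$, gives $c_2=\sum_{v_iv_j\in E(G)}(d_id_j)^{-1}=R_{-1}(G)$, which is (ii). Alternatively, the whole identity follows by viewing $G^\sigma$ as the weighted oriented graph with edge weights $(d_id_j)^{-1}$ and invoking the weighted skew-characteristic-polynomial formula of \cite{GX}, once its weight convention is matched to ours.

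I expect the only real obstacle to be the sign bookkeeping in the third step --- correctly combining $\mathrm{sgn}(\pi)$, the global $(-1)^i$, the factor $-1$ from each matching edge, and the $\pm1$ from each even cycle into exactly $(-2)^{p_e}2^{p_o}$ --- and I would organize it by factoring the contribution componentwise and verifying first the edge-only case and the single-even-cycle case before assembling the general $L$.
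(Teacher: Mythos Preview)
Your proposal is correct. The paper, however, does not give a proof at all: the theorem is stated immediately after the remark that $G^\sigma$ may be regarded as a weighted oriented graph (with edge weights built from the vertex degrees) and that the formula then drops out of the general result of Gong and Xu \cite{GX} on the skew characteristic polynomial of weighted oriented graphs. In other words, the paper's entire argument is the one-line alternative you mention at the very end of your proposal.

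What you do differently is supply a self-contained derivation via the Leibniz expansion of $\det(\lambda\mathbf{I}_n-\mathbf{R}_s)$, handling the pairing/cancellation over odd cycles and the sign bookkeeping for matching edges and even cycles explicitly. This is more elementary and does not rely on the reader having access to \cite{GX}; it also makes the special cases (i)--(iii) transparent. The paper's approach is shorter but treats the formula as a black-box specialization. Both reach the same conclusion; yours is the more informative write-up.
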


\section{The upper and lower bounds}

\begin{theorem}\label{THB}
$\sqrt{4R_{-1}(G)+n(n-2)p^{\frac{2}{n}}}\leq RE_s(G^\sigma)\leq 2 \sqrt{\lfloor\frac{n}{2}\rfloor R_{-1}(G)}$, where $p=|det\textbf{R}_s|=\prod_{i=1}^n|\rho_i|.$

\end{theorem}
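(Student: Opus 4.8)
The plan is to reduce everything to the $\lfloor n/2\rfloor$ eigenvalues lying in the upper half of the skew Randi\'c spectrum, via Proposition~\ref{pro1}. Write the eigenvalues of $\mathbf{R}_s(G^\sigma)$ as $i\mu_1,\dots,i\mu_n$ with $\mu_1\ge\cdots\ge\mu_n$. By Proposition~\ref{pro1} we have $\mu_j=-\mu_{n+1-j}$, the numbers $\mu_1,\dots,\mu_{\lfloor n/2\rfloor}$ are nonnegative, and $\sum_{i=1}^n\mu_i^2=2R_{-1}(G)$; since $\mu_j^2=\mu_{n+1-j}^2$ the last identity gives $\sum_{j=1}^{\lfloor n/2\rfloor}\mu_j^2=R_{-1}(G)$, and the same pairing gives $RE_s(G^\sigma)=\sum_{i=1}^n|\mu_i|=2\sum_{j=1}^{\lfloor n/2\rfloor}\mu_j$. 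Thus both inequalities become statements about the sum of the $\lfloor n/2\rfloor$ nonnegative reals $\mu_1,\dots,\mu_{\lfloor n/2\rfloor}$ whose squares add up to $R_{-1}(G)$.

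For the upper bound I would apply the Cauchy--Schwarz inequality to $(\mu_1,\dots,\mu_{\lfloor n/2\rfloor})$ against the all-ones vector, obtaining $\bigl(\sum_{j=1}^{\lfloor n/2\rfloor}\mu_j\bigr)^2\le\lfloor n/2\rfloor\sum_{j=1}^{\lfloor n/2\rfloor}\mu_j^2=\lfloor n/2\rfloor R_{-1}(G)$, and then multiply by $4$ and take square roots. For the lower bound I would split into cases. If $p=0$ --- which in particular covers every odd $n$, since then the middle eigenvalue vanishes --- the asserted inequality is merely $RE_s(G^\sigma)\ge 2\sqrt{R_{-1}(G)}$, which follows from $\bigl(\sum_j\mu_j\bigr)^2\ge\sum_j\mu_j^2$ because the cross terms are nonnegative. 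If $p>0$, then $n=2k$ is even and no $\mu_j$ vanishes; I would expand
\[
RE_s(G^\sigma)^2=4\Bigl(\sum_{j=1}^k\mu_j\Bigr)^2=4R_{-1}(G)+4\sum_{1\le i\ne j\le k}\mu_i\mu_j ,
\]
and apply the AM--GM inequality to the $k(k-1)$ positive terms $\mu_i\mu_j$ (in which each $\mu_j$ occurs exactly $2(k-1)$ times), getting $\sum_{1\le i\ne j\le k}\mu_i\mu_j\ge k(k-1)\bigl(\prod_{j=1}^k\mu_j\bigr)^{2/k}=k(k-1)p^{2/n}$, where I use $\prod_{j=1}^k\mu_j=\sqrt{\prod_{i=1}^n|\mu_i|}=\sqrt p$ and $2k=n$. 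Since $4k(k-1)=n(n-2)$, this yields $RE_s(G^\sigma)^2\ge 4R_{-1}(G)+n(n-2)p^{2/n}$, and taking square roots finishes the proof.

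The computation is routine once Proposition~\ref{pro1} is invoked; the only point that really needs attention --- and the reason the bounds come out with the sharp constants $\lfloor n/2\rfloor$ and $n(n-2)$ rather than the cruder $n$ and $n(n-1)$ one would get from a naive McClelland-type estimate over all $n$ eigenvalues --- is to run Cauchy--Schwarz and AM--GM over the upper half $\mu_1,\dots,\mu_{\lfloor n/2\rfloor}$ of the spectrum, and to dispose of the vanishing-determinant case (all odd $n$, together with those even orientations having a zero skew Randi\'c eigenvalue) separately so that the single displayed formula stays valid there.
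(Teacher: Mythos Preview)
Your proof is correct and follows essentially the same route as the paper's: reduce to the top $\lfloor n/2\rfloor$ eigenvalues via Proposition~\ref{pro1}, apply Cauchy--Schwarz for the upper bound, and expand the square plus AM--GM on the cross terms for the lower bound. The only cosmetic difference is that the paper splits the lower-bound argument by the parity of $n$ rather than by whether $p=0$, but since AM--GM holds for nonnegative (not just positive) reals the two case divisions cover the same ground.
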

\begin{proof}
Let $\{i\mu_1, i\mu_2, \ldots, i\mu_n\}$ be the skew Randi\'c spectrum of $G^\sigma$, where $\mu_1\geq \mu_2\geq \ldots\geq \mu_n$. Since
 $\sum_{j=1}^n(i\mu_j)^2=tr(\textbf{R}_s^2)= \sum_{j=1}^n\sum_{k=1}^n(r_s)_{jk}(r_s)_{kj}=
-\sum_{j=1}^n\sum_{k=1}^n(r_s)_{jk}^2=-2R_{-1}(G)$,
 we have
$\sum_{j=1}^n|\mu_j|^2=2R_{-1}(G)$.

By Proposition \ref{pro1}, we know that
$\sum_{j=1}^{\lfloor\frac{n}{2}\rfloor}|\mu_j|^2=R_{-1}(G)$ and
$RE_s(G^\sigma)=2\sum_{j=1}^{\lfloor\frac{n}{2}\rfloor}|\mu_i|$.
Applying the Cauchy-Schwartz inequality we have that
\begin{equation}\label{UB}
RE_s(G^\sigma) = 2\sum_{j=1}^{\lfloor\frac{n}{2}\rfloor}|\mu_j|\leq 2\sqrt{\sum_{j=1}^{\lfloor\frac{n}{2}\rfloor}|\mu_j|^2}\sqrt{\lfloor\frac{n}{2}\rfloor}
=2 \sqrt{\lfloor\frac{n}{2}\rfloor R_{-1}(G)}.
\end{equation}
By Proposition \ref{pro1}, we know that
$$[RE_s(G^\sigma)] ^2=\left(2\sum_{j=1}^{\lfloor\frac{n}{2}\rfloor}|\mu_j|\right)^2=4\sum_{j=1}^{\lfloor\frac{n}{2}\rfloor}|\mu_i|^2
+4\sum_{1\leq i\neq j\leq \lfloor\frac{n}{2}\rfloor}|\mu_i||\mu_j|.$$
If $n$ is odd, $p=0$ and $[RE_s(G^\sigma)] ^2\geq4\sum_{j=1}^{\lfloor\frac{n}{2}\rfloor}|\mu_j|^2
=4R_{-1}(G).$ If $n$ is even, by using  arithmetic geometric average inequality, one can get that
$$[RE_s(G^\sigma)] ^2=4\sum_{j=1}^{\lfloor\frac{n}{2}\rfloor}|\mu_j|^2
+4\sum_{1\leq i\neq j\leq \lfloor\frac{n}{2}\rfloor}|\mu_i||\mu_j|\geq 4R_{-1}(G)+ n(n-2)p^{\frac{2}{n}}. $$

Therefore we can obtain the lower bound on  skew Randi\'c energy,
\begin{equation}\label{LB}
RE_s(G^\sigma)\geq \sqrt{4R_{-1}(G)+n(n-2)p^{\frac{2}{n}}}.
\end{equation}
\end{proof}
Note that there are plenty results on the upper and lower bounds on $R_{-1}(G)$,
combining with Theorem \ref{THB}, we can get the upper and lower bounds on  skew Randi\'c energy without the  parameter $R_{-1}(G)$.

Li and Yang \cite{LY} provided the following bounds on $R_{-1}(G)$ given strictly in terms of the order of $G$.

\begin{theorem}\label{t1}
Let $G$ be a graph of order $n$ with no isolated vertices. Then
$$\frac{n}{2(n-1)}\leq R_{-1}(G)\leq \lfloor\frac{n}{2}\rfloor$$
with equality in the lower bound if and only if $G$ is a complete graph, and equality in the
upper bound if and only if either
(i) $n$ is even and $G$ is the disjoint union of $n=2$ paths of length 1, or
(ii) $n$ is odd and $G$ is the disjoint union of $\frac{n-3}{2}$ paths of length 1 and one path of
length 2.
\end{theorem}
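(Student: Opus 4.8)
The plan is to prove the two bounds separately, each time reducing the global statement to a per-vertex or per-component estimate. Throughout write $h(v):=\sum_{u\sim v}(d_ud_v)^{-1}=d_v^{-1}\sum_{u\sim v}d_u^{-1}$; counting each edge from both of its endpoints gives $\sum_{v}h(v)=2R_{-1}(G)$. For the \emph{lower bound}, fix a vertex $v$. By the Cauchy--Schwarz (equivalently AM--HM) inequality, $\sum_{u\sim v}d_u^{-1}\ge d_v^{2}/\sum_{u\sim v}d_u$, and since every neighbour of $v$ has degree at most $n-1$ we have $\sum_{u\sim v}d_u\le(n-1)d_v$; hence $h(v)\ge\frac{1}{n-1}$. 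Summing over $v$ gives $2R_{-1}(G)\ge\frac{n}{n-1}$, which is the desired bound. Equality forces $h(v)=\frac{1}{n-1}$ for every $v$, so (by the second inequality above) every neighbour of every vertex has degree exactly $n-1$; since $G$ has no isolated vertex, every vertex then has degree $n-1$, i.e. $G=K_n$, for which equality indeed holds.

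For the \emph{upper bound}, first note that all degrees are $\ge 1$, so $\sum_{u\sim v}d_u^{-1}\le d_v$, giving $h(v)\le 1$ and $R_{-1}(G)\le n/2$; moreover equality $R_{-1}(G)=|V(G)|/2$ forces $h(v)=1$ for all $v$, i.e. every vertex has all of its neighbours of degree $1$, which (no isolated vertex) forces every component to be a copy of $K_2$. This settles the case $n$ even together with its equality case (i). For $n$ odd, split $G$ into its components $H_1,\dots,H_t$ of orders $m_i\ge 2$; since $\sum m_i=n$ is odd, some $m_j$ is odd, hence $m_j\ge 3$. Assuming the Claim below, that a connected graph $H$ on $m\ge 3$ vertices satisfies $D(H):=m-2R_{-1}(H)\ge 1$ with equality only for $H=P_3$, and applying the trivial bound $R_{-1}(G-H_j)\le (n-m_j)/2$ to the even-order graph $G-H_j$, we obtain
\[
R_{-1}(G)=R_{-1}(H_j)+R_{-1}(G-H_j)\le\frac{m_j-1}{2}+\frac{n-m_j}{2}=\frac{n-1}{2}=\left\lfloor\frac{n}{2}\right\rfloor .
\]
Equality requires $R_{-1}(G-H_j)=(n-m_j)/2$, so $G-H_j$ is a union of copies of $K_2$, and $R_{-1}(H_j)=(m_j-1)/2$, so $H_j=P_3$; this is exactly case (ii). (If $G$ had two odd components then $G-H_j$ still has an odd component, so it is not a union of $K_2$'s and the inequality is strict.)

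It remains to establish the \emph{Claim}. From $m=\sum_{uv\in E(H)}(d_u^{-1}+d_v^{-1})$ one gets $D(H)=\sum_{uv\in E}\frac{d_u+d_v-2}{d_ud_v}$, and splitting $d_u+d_v-2=(d_u-1)+(d_v-1)$ and regrouping the two resulting terms of each edge by its endpoints yields $D(H)=\sum_{v}t(v)$ with $t(v):=\frac{d_v-1}{d_v}\sum_{u\sim v}d_u^{-1}\ge 0$; note $t(v)=0$ when $d_v=1$. If $\Delta(H)=2$ then $H$ is a path or a cycle, and a routine computation gives $D(P_m)=\frac{m-1}{2}$ and $D(C_m)=\frac{m}{2}$, both $\ge 1$ for $m\ge 3$ with equality only for $P_3$. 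If $\Delta(H)\ge 3$, pick $v$ with $d_v=\Delta$. If $v$ has a leaf neighbour then $\sum_{u\sim v}d_u^{-1}\ge 1+\frac{\Delta-1}{\Delta}$, so $t(v)\ge\frac{(\Delta-1)(2\Delta-1)}{\Delta^{2}}>1$ (since $\Delta^{2}-3\Delta+1>0$ for $\Delta\ge 3$). If $v$ has no leaf neighbour, then $t(v)\ge\frac{\Delta-1}{\Delta}$ (using $d_u\le\Delta$ for $u\sim v$), while each of the $\Delta$ neighbours $u$ of $v$ has $d_u\ge 2$, whence $t(u)\ge\frac{d_u-1}{d_u}\cdot\frac{1}{\Delta}\ge\frac{1}{2\Delta}$ (keeping only the edge $uv$); as $v$ and its $\Delta$ neighbours are distinct vertices, $D(H)\ge t(v)+\sum_{u\sim v}t(u)\ge\frac{\Delta-1}{\Delta}+\frac12>1$. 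Hence $D(H)\ge 1$, with equality only for $H=P_3$.

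The main obstacle is precisely this Claim, and inside it the case $\Delta(H)\ge 3$ with $v$ having no leaf neighbour: there the single term $t(v)$ does not reach $1$, so one must also harvest the contributions of the neighbours of $v$ while avoiding double counting — the vertex-grouped form $D(H)=\sum_v t(v)$ is exactly what makes this bookkeeping clean. Everything else (the two per-vertex inequalities, the identity $\sum_v h(v)=2R_{-1}(G)$, the closed forms for $R_{-1}(P_m)$ and $R_{-1}(C_m)$, and the component reduction) is routine.
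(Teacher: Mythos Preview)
Your proof is correct. Note, however, that the paper does not actually prove this theorem: it is quoted verbatim as a result of Li and Yang \cite{LY} and used as a black box to feed into Theorem~\ref{THB}. So there is no ``paper's own proof'' to compare against.

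That said, your argument is clean and self-contained. The lower bound via the per-vertex estimate $h(v)\ge\frac{1}{n-1}$ (AM--HM plus $d_u\le n-1$) is the standard route and the equality analysis is tight. For the upper bound, the reduction to the Claim $D(H)\ge 1$ for connected $H$ on $m\ge 3$ vertices, with the vertex-regrouped identity $D(H)=\sum_v t(v)$, is an elegant way to handle the odd-$n$ case; the case split on whether the $\Delta$-vertex has a leaf neighbour is exactly the right move, and the numerics ($\Delta^2-3\Delta+1>0$ for $\Delta\ge 3$, and $\frac{\Delta-1}{\Delta}+\frac12>1$) check out. One minor stylistic point: in the odd-$n$ argument you invoke the bound $R_{-1}(G-H_j)\le(n-m_j)/2$ ``for the even-order graph $G-H_j$'', but in fact the crude bound $R_{-1}\le |V|/2$ holds for all orders; what you actually need the even order for is the \emph{equality} characterisation (union of $K_2$'s), which you then use correctly.
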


To depict the extremal oriented graphs attaining the bounds on skew
Randi\'c energy, we need another result proved by Li and Wang
\cite{LW}. Note that it has been proved that $\lambda_1=1$ is the
largest Randi\'c eigenvalues with the Perron-Frobenius vector
$\alpha ^T = ( \sqrt{d_1},\ldots,\sqrt{d_n})$; see \cite{Gutman}.
\begin{theorem}\label{th0}
Let $G$ be a connected graph with order $n\geq 3$ and size $m$. Let $\alpha ^T = ( \sqrt{d_1},\ldots,\sqrt{d_n})$. Then $G$ has exactly
$k$ $(2\leq k \leq n)$ and distinct Randi\'c eigenvalues if and only if there are $k-1$ distinct none-one real
numbers $\lambda_2,\ldots,\lambda_k$ satisfying
(i) $\textbf{R}(G)- \lambda_k\textbf{I}_n$ is a singular matrix for $2\leq i \leq k$.
(ii) $\prod\limits_{i = 2}^k {{{(\textbf{R}(G) - }}} {\lambda_i}{\textbf{I}_n}) = \frac{{\prod\limits_{i = 2}^k {{{(1 - }}} {\lambda_i})}}{{2m}}\alpha {\alpha ^T}.$
Moreover, 1, $\lambda_2,\ldots,\lambda_k$  are exactly the $k$ distinct Randi\'c eigenvalues of $G$.
\end{theorem}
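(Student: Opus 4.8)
The plan is to derive the theorem from the spectral theorem for the real symmetric matrix $\textbf{R}(G)$ together with the Perron--Frobenius information about the eigenvalue $1$. Before treating either implication I would fix notation: since $\textbf{R}(G)=\textbf{D}^{-1/2}\textbf{A}(G)\textbf{D}^{-1/2}$ is symmetric, write $\textbf{R}(G)=\sum_{j}\mu_j P_j$ where the $\mu_j$ are the distinct Randi\'c eigenvalues and the $P_j$ are the associated orthogonal spectral projections, so that $P_iP_j=\delta_{ij}P_i$, $\sum_j P_j=\textbf{I}_n$, and in particular $P_1,\dots$ are linearly independent. Recall also from the discussion preceding the theorem that $1$ is the largest Randi\'c eigenvalue with eigenvector $\alpha$; since $G$ is connected, $\textbf{R}(G)$ is an irreducible nonnegative matrix, so by Perron--Frobenius this eigenvalue is \emph{simple}, and hence the projection onto its (one-dimensional) eigenspace is $P_1=\alpha\alpha^T/(\alpha^T\alpha)=\frac{1}{2m}\,\alpha\alpha^T$, using $\alpha^T\alpha=\sum_i d_i=2m$.

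For the ``only if'' direction, assume $G$ has exactly $k$ distinct Randi\'c eigenvalues; list them as $1,\lambda_2,\dots,\lambda_k$, so the $\lambda_i$ are $k-1$ distinct reals, all different from $1$, each making $\textbf{R}(G)-\lambda_i\textbf{I}_n$ singular, which is (i). For (ii), substitute the spectral decomposition into the product and use orthogonality of the projections to obtain
\[
\prod_{i=2}^{k}\bigl(\textbf{R}(G)-\lambda_i\textbf{I}_n\bigr)=\sum_{j}\Bigl(\prod_{i=2}^{k}(\mu_j-\lambda_i)\Bigr)P_j.
\]
Every summand with $\mu_j\neq 1$ has a vanishing factor (the one with $\lambda_i=\mu_j$), so only the term for $\mu_j=1$ survives, and the identity collapses to $\bigl(\prod_{i=2}^{k}(1-\lambda_i)\bigr)P_1=\frac{\prod_{i=2}^{k}(1-\lambda_i)}{2m}\alpha\alpha^T$, which is exactly (ii).

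For the ``if'' direction, suppose $\lambda_2,\dots,\lambda_k$ are distinct reals, all different from $1$, satisfying (i) and (ii). By (i) each $\lambda_i$ is a Randi\'c eigenvalue, and $1$ always is, so $G$ has at least the $k$ distinct eigenvalues $1,\lambda_2,\dots,\lambda_k$. To rule out others, let $1,\nu_2,\dots,\nu_\ell$ be \emph{all} the distinct Randi\'c eigenvalues, with spectral projections $Q_1,\dots,Q_\ell$; the same expansion gives $\prod_{i=2}^{k}(\textbf{R}(G)-\lambda_i\textbf{I}_n)=\sum_{j=1}^{\ell}\bigl(\prod_{i=2}^{k}(\nu_j-\lambda_i)\bigr)Q_j$. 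Comparing with (ii), using $Q_1=\frac{1}{2m}\alpha\alpha^T$ and the linear independence of $Q_1,\dots,Q_\ell$, forces $\prod_{i=2}^{k}(\nu_j-\lambda_i)=0$ for every $j\ge 2$, i.e.\ each $\nu_j$ equals some $\lambda_i$. Hence every eigenvalue lies in the $k$-element set $\{1,\lambda_2,\dots,\lambda_k\}$, so $\ell=k$ and these are precisely the distinct Randi\'c eigenvalues; this also yields the ``moreover'' statement.

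The delicate points are entirely of a bookkeeping nature: one must observe that the scalar $\prod_{i=2}^{k}(1-\lambda_i)$ is nonzero (because no $\lambda_i$ equals $1$), so that matching the coefficients of $P_1$ in (ii) is meaningful rather than vacuous, and one must use connectedness---via Perron--Frobenius---to know that $1$ is a \emph{simple} eigenvalue and therefore that its spectral projection is exactly $\frac{1}{2m}\alpha\alpha^T$ rather than merely some projection whose range contains $\alpha$. I do not foresee a deeper obstacle: the computation is the standard ``minimal polynomial via spectral projections'' argument, with the one extra ingredient of singling out the Perron eigenvalue.
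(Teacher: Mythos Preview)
Your argument is correct. The paper does not supply its own proof of this theorem; it is quoted from Li and Wang \cite{LW} and used as a black box, so there is nothing in the present paper to compare your proof against. The spectral-projection approach you outline---writing $\textbf{R}(G)=\sum_j \mu_j P_j$, identifying the Perron projection as $P_1=\tfrac{1}{2m}\alpha\alpha^T$ via connectedness and simplicity of the eigenvalue $1$, and then reading off both implications from the expansion $\prod_{i=2}^k(\textbf{R}(G)-\lambda_i\textbf{I}_n)=\sum_j\bigl(\prod_{i=2}^k(\mu_j-\lambda_i)\bigr)P_j$---is the standard and natural proof, and all the steps you flag (nonvanishing of $\prod_{i=2}^k(1-\lambda_i)$, linear independence of the spectral projections, the identity $\alpha^T\alpha=2m$) are handled correctly.
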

From the above theorem, the authors  gave the following corollary in \cite{LW}.
\begin{coro}\label{t0}
A connected graph $G$ has exactly two and distinct Randi\'c eigenvalues if and only
if $G$ is a complete graph with order at least two.
\end{coro}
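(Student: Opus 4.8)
The plan is to read off the corollary from Theorem~\ref{th0} with $k=2$, after disposing of the sufficiency by a one-line computation. For sufficiency, if $G=K_n$ with $n\ge 2$ then $\mathbf{R}(K_n)=\frac{1}{n-1}(\mathbf{J}_n-\mathbf{I}_n)$, whose eigenvalues are $1$ (simple) and $-\frac{1}{n-1}$ (with multiplicity $n-1$); thus $K_n$ has exactly two distinct Randi\'c eigenvalues. For necessity, let $G$ be connected with exactly two distinct Randi\'c eigenvalues. If $n=2$ the only such graph is $K_2$, so assume $n\ge 3$ and apply Theorem~\ref{th0} with $k=2$: there is a single real number $\lambda_2\ne 1$ with $\mathbf{R}(G)-\lambda_2\mathbf{I}_n=\frac{1-\lambda_2}{2m}\alpha\alpha^T$, where $\alpha^T=(\sqrt{d_1},\dots,\sqrt{d_n})$, i.e.
$$\mathbf{R}(G)=\lambda_2\mathbf{I}_n+\frac{1-\lambda_2}{2m}\,\alpha\alpha^T .$$

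Next I would extract the structure of $G$ from this identity by comparing entries. Since the diagonal of $\mathbf{R}(G)$ vanishes and $(\alpha\alpha^T)_{ii}=d_i$, the $(i,i)$-entry gives $0=\lambda_2+\frac{1-\lambda_2}{2m}d_i$ for every $i$; as $\lambda_2\ne 1$ this shows all $d_i$ are equal, so $G$ is $r$-regular with $2m=nr$. Substituting back yields $0=\lambda_2+\frac{1-\lambda_2}{n}$, hence $\lambda_2=-\frac{1}{n-1}$ and $1-\lambda_2=\frac{n}{n-1}$. Now look at an off-diagonal entry with $i\ne j$: the right-hand side equals $\frac{1-\lambda_2}{2m}\sqrt{d_id_j}=\frac{(1-\lambda_2)r}{nr}=\frac{1}{n-1}\ne 0$, whereas $\mathbf{R}(G)_{ij}=0$ whenever $v_i\not\sim v_j$. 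Therefore no two vertices of $G$ are non-adjacent, i.e., $G=K_n$, which also makes $r=n-1$ and the off-diagonal values $\frac{1}{r}=\frac{1}{n-1}$ consistent. Combined with the sufficiency, this proves the corollary.

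There is no genuinely hard step here; the only points requiring a little care are (a) justifying regularity from the diagonal relation (which is where the hypothesis $\lambda_2\ne 1$ is used, together with $d_i\ge 1$), and (b) the bookkeeping for small orders, since Theorem~\ref{th0} is stated only for $n\ge 3$, so the case $n=2$ must be checked by hand. Alternatively, one can bypass the explicit use of Theorem~\ref{th0} and argue directly via the spectral decomposition $\mathbf{R}(G)=1\cdot\frac{\alpha\alpha^T}{2m}+\lambda_2\bigl(\mathbf{I}_n-\frac{\alpha\alpha^T}{2m}\bigr)$, valid because $G$ connected forces the Randi\'c eigenvalue $1$ to be simple with Perron eigenvector $\alpha$; this reproduces the displayed identity and the same entrywise analysis then applies.
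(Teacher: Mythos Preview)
Your proof is correct and follows essentially the approach indicated in the paper: the paper does not give an explicit argument but simply records the result as a corollary of Theorem~\ref{th0} from \cite{LW}, and you have spelled out precisely how the $k=2$ case of Theorem~\ref{th0} forces $G=K_n$ via the entrywise analysis. The small extra care you take with the $n=2$ base case and the alternative spectral-decomposition shortcut are both fine and consistent with this derivation.
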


Hence, we can obtain the following result.
\begin{theorem}\label{t1.1}
Let $G^\sigma$ be an oriented graph of order $n$ with no isolated
vertices. Then
$$\sqrt{\frac{2n}{n-1}+n(n-2)p^{\frac{2}{n}}}\leq RE_s(G^\sigma)\leq 2\lfloor\frac{n}{2}\rfloor,$$
where $p=|det\textbf{R}_s(G^\sigma)|$.  The equality in the lower
bound holds if and only if $G$ is a complete graph with exactly two
nonzero skew Randi\'c eigenvalues when $n$ is odd, and
$\textbf{R}_s(G^\sigma)^T\textbf{R}_s(G^\sigma)=\frac{1}{n-1}\mathbf{I_n}$
when $n$ is even. The equality in the upper bound holds if and only
if either $n$ is even and $G$ is the disjoint union of  paths of
length 1, or, $n$ is odd and $G$ is the disjoint union of
$\frac{n-3}{2}$ paths of length 1 and one path of length 2, and
$\sigma$ is an arbitrary orientation of $G$.
\end{theorem}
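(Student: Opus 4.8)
The plan is to derive both inequalities by feeding the bounds $\frac{n}{2(n-1)}\le R_{-1}(G)\le\lfloor n/2\rfloor$ of Theorem \ref{t1} into Theorem \ref{THB}. For the upper bound, $RE_s(G^\sigma)\le 2\sqrt{\lfloor n/2\rfloor R_{-1}(G)}\le 2\sqrt{\lfloor n/2\rfloor\cdot\lfloor n/2\rfloor}=2\lfloor n/2\rfloor$; for the lower bound, $RE_s(G^\sigma)\ge\sqrt{4R_{-1}(G)+n(n-2)p^{2/n}}\ge\sqrt{\tfrac{2n}{n-1}+n(n-2)p^{2/n}}$. Thus the inequalities are immediate, and the whole content is the equality discussion. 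Here I would use the elementary principle that if $RE_s(G^\sigma)$ meets the final bound then, both chained steps pointing the same way, each must be tight: the relevant equality case of Theorem \ref{THB} must hold \emph{and} $R_{-1}(G)$ must attain its extreme value, so Theorem \ref{t1} pins down the underlying graph.

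For the upper bound, equality forces $R_{-1}(G)=\lfloor n/2\rfloor$, so by Theorem \ref{t1} the graph $G$ is a disjoint union of $\lfloor n/2\rfloor$ copies of $P_2$ (for $n$ even) or of $\frac{n-3}{2}$ copies of $P_2$ together with one $P_3$ (for $n$ odd), and equality also forces the Cauchy--Schwarz step of Theorem \ref{THB} to be tight, i.e.\ $|\mu_1|=\cdots=|\mu_{\lfloor n/2\rfloor}|$. For the converse I would compute the skew Randi\'c spectrum blockwise: a $P_2$ contributes $\{\pm i\}$, and a $P_3$ (degree sequence $1,2,1$, hence nonzero entries $\pm\frac1{\sqrt2}$ and characteristic polynomial $\lambda^3+\lambda$) contributes $\{0,\pm i\}$, for any orientation. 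Hence each such forest has all its nonzero $\mu_j$ equal to $1$, the Cauchy--Schwarz step is automatically tight, and every orientation of these forests attains $RE_s=2\lfloor n/2\rfloor$.

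For the lower bound, equality forces $R_{-1}(G)=\frac{n}{2(n-1)}$, hence $G=K_n$ by Theorem \ref{t1}, together with tightness of the second estimate in the proof of Theorem \ref{THB}. When $n$ is odd we have $p=0$ and that estimate reads $[RE_s]^2\ge 4R_{-1}(G)$, tight exactly when $\sum_{i\ne j}|\mu_i||\mu_j|=0$, i.e.\ at most one of $\mu_1,\dots,\mu_{(n-1)/2}$ is nonzero; since $\sum_j|\mu_j|^2=R_{-1}(G)>0$, exactly one is nonzero, which is the statement that $\textbf{R}_s(G^\sigma)$ has exactly two nonzero eigenvalues, and conversely for such a $K_n^\sigma$ one checks $RE_s=\sqrt{2n/(n-1)}$. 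When $n$ is even the step in question is an AM--GM inequality on the $\frac{n(n-2)}{4}$ products $|\mu_i||\mu_j|$, tight iff all of these products coincide. I expect the only real obstacle to be the degenerate sub-case of this, where some $\mu_j=0$: it must be excluded, and indeed it is, because for $n$ even every orientation of $K_n$ has a nonsingular skew-adjacency matrix --- its Pfaffian is a sum of the odd number $(n-1)!!$ of terms $\pm1$, hence nonzero --- so no $\mu_j$ vanishes and AM--GM equality genuinely forces $|\mu_1|=\cdots=|\mu_{n/2}|$; together with $\sum_j|\mu_j|^2=R_{-1}(G)=\frac{n}{2(n-1)}$ the common value is $\frac1{\sqrt{n-1}}$, which says precisely that $\textbf{R}_s(G^\sigma)^T\textbf{R}_s(G^\sigma)=\frac1{n-1}\mathbf{I}_n$. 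Conversely, this identity makes all $n$ singular values of $\textbf{R}_s(G^\sigma)$ equal to $\frac1{\sqrt{n-1}}$, so $R_{-1}(G)=\frac{n}{2(n-1)}$ is minimal (whence $G=K_n$) and $p^{2/n}=\frac1{n-1}$, and direct substitution gives $RE_s(G^\sigma)=\frac{n}{\sqrt{n-1}}=\sqrt{\tfrac{2n}{n-1}+n(n-2)p^{2/n}}$, closing the argument.
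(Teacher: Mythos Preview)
Your argument is correct and follows the same overall strategy as the paper's proof: chain the bounds of Theorem~\ref{THB} with those of Theorem~\ref{t1}, then analyze when both chained inequalities are simultaneously tight. The equality discussions differ in two places. For the converse of the upper bound, the paper appeals forward to Corollary~\ref{cor1} (forests satisfy $Sp_{\textbf{R}_s}(G^\sigma)=\mathbf{i}Sp_{\textbf{R}}(G)$ for every orientation) together with Corollary~\ref{t0}, whereas you simply compute the skew Randi\'c spectra of $P_2$ and $P_3$ directly; your route is more self-contained and avoids the forward reference. For the lower bound with $n$ even, the paper passes directly from ``AM--GM is tight'' to ``all $|\mu_j|$ are equal'', silently assuming no $\mu_j$ vanishes; you notice that AM--GM equality would also hold if all cross-products were zero, and you close this gap with the Pfaffian/parity argument showing that every tournament on an even number of vertices has nonsingular skew-adjacency matrix. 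That extra step is a genuine improvement in rigor over the paper's treatment.
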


\begin{proof}
The bounds on $RE_s(G^\sigma)$ comes directly from Theorem \ref{THB}
and Theorem \ref{t1}. We focus on the equality. From Theorem
\ref{t1} and  Theorem \ref{THB},  we know that, if $n$ is odd, the
equality in the lower bound holds  if and only if $G$ is a complete
graph and
$[RE_s(G^\sigma)]^2=4\sum_{i=1}^{\lfloor\frac{n}{2}\rfloor}|\mu_i|^2$,
that is, $\mu_i=0$ for all $i=2,\cdots, \lfloor\frac{n}{2}\rfloor$.
If $n$ is even,  the equality in the lower bound holds  if and only
if $G$ is a complete graph and $|\mu_i|=|\mu_j|$ for all $i\neq j$.
Thus, a complete oriented graph with odd vertices reaches the lower
bound if and only if it has exactly two nonzero skew Randi\'c
eigenvalues or all the skew Randi\'c eigenvalues are zero. Since we
assume $G$ has no isolated vertices, the latter case can not happen.
For an even $n$, the equality in the lower bound  holds  if and only
if $G$ is a  complete graph and there exists a constant $k$ such
that $|\rho_i|^2=k$ for all $i$, which  holds if and only if
$\textbf{R}_s(G^\sigma)^T\textbf{R}_s(G^\sigma)=\frac{1}{n-1}\mathbf{I_n}$.

From Theorem \ref{t1} and  Theorem \ref{THB}, the equality in the
upper bound holds  if and only if $G$ is the graph described in
Theorem \ref{t1} and  $|\rho_i|=|\rho_j|$ for all $1\leq i\neq j\leq
\lfloor\frac{n}{2}\rfloor$, that is,  either $n$ is even and $G$ is
the disjoint union of  paths of length 1, or $n$ is odd and $G$ is
the disjoint union of $\frac{n-3}{2}$ paths of length 1 and one path
of length 2. In both cases,  let $\sigma$ be  an arbitrary
orientation of $G$. By Corollary \ref{cor1} and Corollary \ref{t0},
$G^\sigma$ attains the upper bound, and the converse also holds.
\end{proof}

Now let us consider the bounds on skew Randi\'c energies of trees.
On the index $R_{-1}(T)$ when $T$ is a tree, Clark and Moon
\cite{CM} gave the following result.

\begin{theorem}\label{t2}
For a tree $T$ of order $n$, $1 \leq  R_{-1}(T)\leq
\frac{5n+8}{18}$. The equality  in lower bound holds if and only if
$T$ is the star.
\end{theorem}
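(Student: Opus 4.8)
The two inequalities are essentially independent, and only the lower one needs its equality case discussed; the upper bound is the delicate part (it is the one due to Clark and Moon).

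For the lower bound I would induct on $n$. The base case is $n=2$, where $T=P_2=K_{1,1}$ and $R_{-1}(T)=1$. For $n\ge 3$, pick a leaf $v$ with neighbour $u$, put $d=d_u\ge 2$, and set $T'=T-v$. Splitting the edges of $T$ into the edge $uv$, the remaining $d-1$ edges at $u$, and the edges not meeting $u$, and noting that deleting $v$ lowers only the degree of $u$ (from $d$ to $d-1$), one gets
$$R_{-1}(T)=R_{-1}(T')+\frac1d-\frac{1}{d(d-1)}\sum_{w\in N_T(u)\setminus\{v\}}\frac1{d_w}.$$
Each of the $d-1$ summands is at most $1$, so the last term is at least $-\tfrac1d$, whence $R_{-1}(T)\ge R_{-1}(T')\ge 1$ by induction. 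For equality, $R_{-1}(T)=1$ forces both $R_{-1}(T')=1$ (so $T'$ is a star by induction) and $\sum_{w\in N_T(u)\setminus\{v\}}\tfrac1{d_w}=d-1$, i.e. every neighbour of $u$ other than $v$ is a leaf of $T$; checking the two possible positions of $u$ inside the star $T'$ then shows $T$ is again a star, and conversely $R_{-1}(K_{1,n-1})=1$.

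For the upper bound the plan is a strong induction on $n$, with the small orders and the easy families handled directly: for a path $R_{-1}(P_n)=\tfrac{n+1}4$ and $\tfrac{n+1}4\le\tfrac{5n+8}{18}$ is equivalent to $2n+14\ge 0$, while for a star $R_{-1}=1\le\tfrac{5n+8}{18}$. Assume now $T$ is neither. The first reduction handles any vertex $z$ of degree $d\ge 3$ all of whose neighbours but one are leaves (for instance the second vertex $v_1$ of a longest path when $d_{v_1}\ge 3$): deleting $z$ and its $d-1$ pendant leaves yields $T^-$ on $n-d$ vertices with the degree of $z$'s unique non-leaf neighbour dropped by one, and a short computation at $z$ and that neighbour gives $R_{-1}(T)\le R_{-1}(T^-)+1-\tfrac1{2d}$; since $\tfrac{5d}{18}+\tfrac1{2d}\ge 1$ for every $d\ge 3$, the inductive bound on $R_{-1}(T^-)$ closes this case. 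When no such $z$ exists, take a longest path $v_0v_1\cdots v_\ell$ and let $k$ be the least index with $d_{v_k}\ge 3$; such a $k$ exists (else $T$ is a path) and $k\ge 2$ (else $v_1$ would be a vertex of the previous kind), so $v_0,v_1,\dots,v_{k-1}$ is a pendant path of degree-$2$ vertices below the leaf $v_0$, hanging at $v_k$. Deleting $v_0,\dots,v_{k-1}$ and accounting for the drop of $d_{v_k}$ by one gives
$$R_{-1}(T)=R_{-1}\big(T-\{v_0,\dots,v_{k-1}\}\big)+\frac k4+\frac{1}{2d_{v_k}}-\frac{1}{d_{v_k}(d_{v_k}-1)}\sum_{w\in N_T(v_k)\setminus\{v_{k-1}\}}\frac1{d_w};$$
since $\tfrac14-\tfrac5{18}=-\tfrac1{36}$, dropping the last term and applying induction leaves a surplus of $-\tfrac k{36}+\tfrac1{2d_{v_k}}$, which is $\le 0$ precisely when $d_{v_k}\ge 18/k$. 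This disposes of every case with $k\ge 6$ and also of the larger values of $d_{v_k}$ for $2\le k\le 5$.

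The main obstacle is the finite list of leftover configurations, namely $k\in\{2,3,4,5\}$ with $3\le d_{v_k}<18/k$, under the standing hypothesis that no degree-$\ge 3$ vertex has all but one of its neighbours a leaf. For these I would keep the correction term $-\tfrac1{d_{v_k}(d_{v_k}-1)}\sum_w\tfrac1{d_w}$ and exploit the longest-path constraint, which forces every neighbour of $v_k$ off the path to begin a branch of height at most $k-1$; together with the structural hypothesis this makes those branches short pendant paths, so that one can instead delete $v_k$ together with all of them and with $v_0,\dots,v_{k-1}$, and a bounded computation in each of the finitely many residual shapes keeps the per-vertex loss at or below $\tfrac5{18}$ and closes the induction. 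Organising this last case analysis — in particular, picking in each residual shape the right deletion set — is where essentially all the effort lies; the remainder is bookkeeping. No equality analysis is needed for the upper bound, since the statement does not ask for it.
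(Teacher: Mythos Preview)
The paper does not prove this theorem at all: it is quoted verbatim as a result of Clark and Moon \cite{CM} and used only as an input to the bounds on skew Randi\'c energy. So there is no ``paper's own proof'' to compare your proposal against.

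On the substance of your attempt: the lower-bound argument is clean and correct. Your deletion identity
\[
R_{-1}(T)=R_{-1}(T')+\frac1d-\frac{1}{d(d-1)}\sum_{w\in N_T(u)\setminus\{v\}}\frac1{d_w}
\]
is right, and bounding each $1/d_w$ by $1$ gives $R_{-1}(T)\ge R_{-1}(T')$; the equality discussion is also fine (in the ``$u$ is a leaf of $T'$'' subcase you are forced down to $n=3$, where $T=P_3=K_{1,2}$ is again a star).

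For the upper bound, what you have written is an outline rather than a proof. The path and star checks and the first reduction (deleting a degree-$\ge 3$ vertex all but one of whose neighbours are leaves) are correct as stated. But you yourself flag that the residual cases $k\in\{2,3,4,5\}$ with $3\le d_{v_k}<18/k$ are ``where essentially all the effort lies,'' and you do not actually carry them out; the claim that in each residual shape one can choose a deletion set keeping the per-vertex loss at or below $5/18$ is asserted, not verified. Clark and Moon's original argument is itself a somewhat intricate induction with local rearrangements, so this is not a case where the missing piece is routine bookkeeping --- it is the heart of the matter. As written, the upper-bound half is a plausible plan but not a proof.
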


Pavlovi\'c, Stojanvoi\'c and Li \cite{PSL} determined the sharp
upper bound on the Randi\'c index $R_{-1}(T)$ among all trees of
order $n$ for every $n \geq 720$.
\begin{theorem}\label{t3}
Let ${T_t^n}$
be a tree of order $n \geq 720$ and $n -1 \equiv t (mod $ $7)$. Denote by ${R^*_{-1}}$ the
maximum value of the Randi\'c index $R_{-1}(T)$ among all trees ${T_t^n}$. Then,
\begin{equation*}
{R^*_{-1}}=
\left\{
  \begin{array}{ll}
   \frac{15n-1}{56} & \hbox{$t = 0$,} \\
    \frac{15n-1}{56}-\frac{1}{56}+\frac{7}{4(n+5)}& \hbox{$t = 1$}, \\
    \frac{15n-1}{56}-\frac{3}{5}\cdot\frac{1}{56}-\frac{7}{20(n-3)}& \hbox{$t = 2$}, \\
    \frac{15n-1}{56}-\frac{2}{3}\cdot\frac{1}{56}+\frac{7}{6(n+3)}& \hbox{$t = 3$}, \\
    \frac{15n-1}{56}-\frac{6}{5}\cdot\frac{1}{56}-\frac{7}{20(n-12)}& \hbox{$t = 4$}, \\
    \frac{15n-1}{56}-\frac{1}{3}\cdot\frac{1}{56}+\frac{7}{12(n+1)}& \hbox{$t = 5$}, \\
    \frac{15n-1}{56}-\frac{29}{27}\cdot\frac{1}{56}-\frac{35}{36(n-3)}& \hbox{$t = 6$}.
    \end{array}
\right.
\end{equation*}

\end{theorem}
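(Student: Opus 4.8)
The plan is to follow the route that is standard for extremal problems on Randi\'c-type indices: \emph{first} pin down the shape of a tree that maximizes $R_{-1}$ by a chain of local exchange lemmas, and \emph{then} carry out a finite optimization inside the resulting class.

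The first step is the heart of the matter. I would take a tree $T$ of order $n$ maximizing $R_{-1}(T)$ and rewrite $R_{-1}(T)=\tfrac{n-1}{4}-\sum_{uv\in E(T)}\bigl(\tfrac14-\tfrac1{d(u)d(v)}\bigr)$, so that maximizing $R_{-1}$ is the same as making the correction sum as negative as possible. Each summand is $0$ on an edge joining two degree-$2$ vertices, equals $-\tfrac14$ on a pendant edge incident to a degree-$2$ vertex, and is \emph{positive} on every edge incident to a vertex of degree at least $3$; a short computation shows that if every maximal path of degree-$2$ vertices --- whether internal or pendant --- has length at least $2$, then the correction sum is exactly $-\tfrac12$, i.e.\ $R_{-1}(T)=\tfrac{n+1}{4}$ no matter how $T$ branches. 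To beat $P_n$ one is therefore forced to put edges directly between high-degree vertices, and the profit of such an edge is governed by the degrees at its ends. I would package this as a list of exchange lemmas of the form ``if $T$ contains configuration $X$ then the swap $X\mapsto Y$ strictly increases $R_{-1}$'': ruling out leaves at vertices of degree $\ge 4$, shortening overly long internal threads (relocating the freed vertex onto a pendant thread), forcing the branch vertices to have the right degree, forcing the ``reduced tree'' on the branch vertices to be a path, and so on, until $T$ is determined --- apart from how it is finished off at its two ends --- as a concatenation of copies of one fixed block on $7$ vertices.

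Once the interior is rigid, the second step is bookkeeping: write $R_{-1}(T)=(\text{number of blocks})\cdot(\text{block value})+(\text{boundary correction})$, where the boundary correction depends only on how the two ends are completed and on the $t$ leftover vertices that do not finish a block, $t\equiv n-1\pmod 7$; the coefficient $\tfrac{15}{56}$ is read off from the block value, and for each $t\in\{0,1,\dots,6\}$ one optimizes over the finitely many ways to complete the ends and place the leftover, which gives the seven formulas. The hypothesis $n\ge 720$ enters exactly here: one must check that there are enough complete blocks for the periodic description to be forced and to beat every globally different competitor --- such a competitor can only ``amortize'' its extra cost over a bounded number of vertices --- and carrying that comparison through is what produces the explicit threshold.

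The main obstacle is the first step. Since $R_{-1}$ reacts to the degrees of the endpoints of \emph{every} incident edge, any local surgery perturbs the weights of all edges at the modified vertices, so each exchange lemma demands its own careful sign check, and one needs a sufficient supply of them, applied in a non-circular order, to collapse $T$ to the canonical block form. Guessing the correct extremal block and, above all, proving that it is not merely a local optimum but beats all competitors, is where the real work lies; the residue analysis and end-corrections of the second step are then routine.
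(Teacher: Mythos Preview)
The paper does not prove this theorem at all: it is quoted verbatim from Pavlovi\'c, Stojanovi\'c and Li \cite{PSL} and used only as a black box (combined with Theorem~\ref{THB}) to bound the skew Randi\'c energy of trees. There is therefore no ``paper's own proof'' for you to be compared against; your sketch is aimed at reproducing the argument of \cite{PSL}, not anything in the present paper.

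That said, your outline is broadly in the right spirit for the \cite{PSL} result --- local exchange arguments to force a periodic block structure, then a residue-by-residue boundary optimization --- but as written it is only a plan, not a proof. The genuinely hard part, which you yourself flag, is the first step: you would need to \emph{identify} the correct $7$-vertex block, supply the full list of exchange lemmas with their sign computations, and order them so that the reduction terminates. You have not done any of that here, and the claim ``until $T$ is determined \ldots\ as a concatenation of copies of one fixed block on $7$ vertices'' hides all of the work. The threshold $n\ge 720$ in \cite{PSL} also signals that the finite optimization and the comparison with sporadic competitors is far from routine. If the intent is to reprove the cited theorem, you should consult \cite{PSL} directly; if the intent is merely to use it, no proof is required in this paper.
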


Combining these bounds with Theorem \ref{THB}, we can obtain the
bounds on skew Randi\'c energy of trees.

We point out that the lower bound is sharp, and the equality in
lower bound holds  if and only if $T$ is a star with odd vertices
and an arbitrary orientation or $T=P_2$ with an arbitrary
orientation. If $n$ is odd, any oriented tree attains the lower
bound if and only if it is  a star and satisfies that its skew
Randi\'c spectrum is $\{i\mu_1,0,\cdots,0,-i\mu_1\}$, where
$\mu_1>0$. With Theorem \ref{th2.5}, we know that its Randi\'c
spectrum is $\{\mu_1,0,\cdots,0,-\mu_1\}$. Since 1 is the largest
Randi\'c eigenvalue of a connected graph, we have that $\mu_1=1$.
Then apply Theorem \ref{th0} to the odd ordered star $T$ with
$\lambda_2=0$, $\lambda_3=-1$, we can see that (i) and (ii)  hold.
So the skew Randi\'c energy of  a star with odd vertices and an
arbitrary orientation equals to $\sqrt{4 R_{-1}(T)}=2$ which reaches
the lower bound. If $n$ is even, any oriented tree attains the lower
bound must be a star and satisfy that $|\rho_i|=|\rho_j|$ for all
$i\neq j$. By Theorem \ref{th2.5}, that  implies the extremal trees
have exactly two distinct Randi\'c eigenvalues. But it is impossible
by Corollary \ref{t0} unless $T=P_2$. Thus, the equality in lower
bound holds  if and only if $T$ is a star with odd vertices and an
arbitrary orientation, or $T=P_2$.

Obviously, the upper bound on skew Randi\'c energies of trees that
obtained from Theorem \ref{THB}  and  Theorem \ref{t3} is not sharp.
From the proof of  Theorem \ref{THB}, we know that the oriented
trees attaining the upper bound in  Theorem \ref{THB} must satisfy
that $|\rho_i|=|\rho_j|$ for all $1\leq i\neq j\leq\lfloor
\frac{n}{2}\rfloor$. If $n$ is even, by  Theorem  \ref{th2.5}, the
extremal trees have at most two distinct Randi\'c eigenvalues, which
is impossible by   Corollary  \ref{t0}. If $n$ is odd, the Randi\'c
spectrum of $T$ is $\{1,\cdots,1,0,-1,\cdots,-1\}$. Since the
Randi\'c eigenvalue $1$ has multiplicity one for connected graphs,
we know that $T=P_3$, that is, $n=3$. Hence the upper bound on skew
Randi\'c energies of trees that comes from Theorem \ref{THB} and
Theorem  \ref{t3} cannot be sharp.
\\[2mm]

A \emph{chemical graph} is a graph in which no vertex has degree
greater than four. Analogously, a \emph{chemical tree} is a tree $T$
for which $\Delta(T ) \leq 4 $. Li and Yang \cite{LY2} gave the
sharp lower and upper bounds on $R_{-1}(T)$ among all chemical
trees.

\begin{theorem}\label{t4}
Let ${T}$
be a chemical tree of order $n$. Then,
\begin{equation*}
{R_{-1}(T )}\geq
\left\{
  \begin{array}{ll}
   1 & \hbox{if $n \leq 5$,} \\
    \frac{11}{8}& \hbox{if $n =6$}, \\
   \frac{3}{2}& \hbox{if $n =7$}, \\
    2& \hbox{if $n =10$}, \\
    \frac{3n+1}{16}& \hbox{for other values of $n$}. \\
    \end{array}
\right.
\end{equation*}

\end{theorem}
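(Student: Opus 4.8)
The plan is to prove the uniform estimate $R_{-1}(T)\ge\frac{3n+1}{16}$ for every chemical tree $T$ by a short weighting argument, and then to treat separately the few small orders at which this estimate is not attained; the sporadic values $\frac{11}{8},\frac{3}{2},2$ for $n\in\{6,7,10\}$ and the value $1$ for $n\le 5$ will then have to be pinned down by finite case checks.

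For the uniform estimate I would introduce the degree weights
\[
g(1)=3,\qquad g(2)=1,\qquad g(3)=\tfrac{1}{3},\qquad g(4)=0,
\]
and verify the elementary edge inequality
\[
\frac{1}{d(u)d(v)}\ge\frac{1}{16}\bigl(1+g(d(u))+g(d(v))\bigr)\qquad\bigl(1\le d(u),d(v)\le 4\bigr),
\]
which amounts to checking the ten cases for the unordered pair $(d(u),d(v))$, with equality exactly when $\max\{d(u),d(v)\}=4$. Summing over the $n-1$ edges of $T$, rewriting $\sum_{uv\in E(T)}\bigl(g(d(u))+g(d(v))\bigr)=\sum_{v}d(v)g(d(v))$, and using the pointwise identity $d\cdot g(d)=4-d$ on $\{1,2,3,4\}$ together with $\sum_{v}d(v)=2(n-1)$, the right-hand side collapses to
\[
R_{-1}(T)\ge\frac{1}{16}\Bigl((n-1)+\sum_{v}\bigl(4-d(v)\bigr)\Bigr)=\frac{1}{16}\bigl((n-1)+(2n+2)\bigr)=\frac{3n+1}{16}.
\]
The hypothesis $\Delta(T)\le 4$ is essential here: no value of $g(5)$ keeps the edge inequality true when an endpoint has degree $5$.

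For sharpness, the case analysis just performed shows that $R_{-1}(T)=\frac{3n+1}{16}$ forces every edge of $T$ to be incident with a vertex of degree $4$, i.e.\ the vertices of degree $<4$ to form an independent set. If $a$ is the number of degree-$4$ vertices and $b$ the number of edges joining two of them, then $b=4a-(n-1)$, while the tree structure forces $0\le b\le a-1$; hence an equality-attaining tree can exist only when some integer $a$ lies in $\bigl[\,(n-1)/4,\ (n-2)/3\,\bigr]$, which happens iff $n=5$ or $n\ge 8$ with $n\ne 10$. For precisely those $n$ I would, conversely, exhibit a caterpillar attaining equality — a path of degree-$4$ vertices with a couple of degree-$2$ ``linkers'' inserted to adjust $n$ modulo $3$, carrying two pendant leaves at each internal spine vertex and three at each end — and check directly that its $R_{-1}$ equals $\frac{3n+1}{16}$. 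This settles the ``other values of $n$'', and also $n=5$ (where the bound is $1$ and $K_{1,4}$ is extremal).

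It remains to handle the orders where $\frac{3n+1}{16}$ is not attained: the few trees with $n\le 4$, and $n\in\{6,7,10\}$. For these $n$ no chemical tree has all of its edges incident with degree-$4$ vertices, so at least one edge must lose slack in the edge inequality; quantifying the forced loss — using the degree-count identity $3n_1+2n_2+n_3=2n+2$ (with $n_i$ the number of degree-$i$ vertices) to bound the number and degree-type of the deficient edges — should raise the bound to $1$ for $n\le 5$ and to $\frac{11}{8},\frac{3}{2},2$ for $n=6,7,10$, each matched by a small explicit tree. I expect this last step, the exact extremal analysis for the three sporadic orders, to be the only genuinely delicate part; the uniform inequality is immediate once the weights $g$ are identified.
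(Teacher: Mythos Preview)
The paper does not prove this theorem at all: it is quoted verbatim from Li and Yang~\cite{LY2} and used only as input to Theorem~\ref{THB}, so there is no ``paper's own proof'' to compare your argument against.

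That said, your weighting approach is sound and rather clean. The choice $g(d)=(4-d)/d$ is exactly the dual certificate that makes the edge inequality
\[
\frac{1}{d(u)d(v)}\ \ge\ \frac{1}{16}\bigl(1+g(d(u))+g(d(v))\bigr)
\]
tight precisely on edges touching a degree-$4$ vertex, and the summation telescopes as you describe to give $R_{-1}(T)\ge(3n+1)/16$. Your interval criterion $\lceil(n-1)/4\rceil\le\lfloor(n-2)/3\rfloor$ for the existence of an equality tree is correct, and the caterpillar construction covers exactly $n=5$ and $n\ge 8$, $n\ne10$. One small caution on the sporadic cases: the minimal edge-slack in your inequality is $1/144$ (for a $(3,3)$ edge) and $1/48$ (for a $(2,3)$ edge), both strictly smaller than the deficits $11/8-19/16=3/16$, $3/2-22/16=1/8$, and $2-31/16=1/16$ that you need at $n=6,7,10$; so ``at least one deficient edge'' is not enough, and the finite case analysis for those three orders really does require enumerating the degree sequences (via $3n_1+2n_2+n_3=2n+2$) and checking several trees, not just one. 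You have correctly flagged that as the delicate step, but be aware it is a genuine enumeration rather than a one-line slack argument.
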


\begin{theorem}\label{t5}
Let ${T}$
be a chemical tree of order $n$, $n>6$. Then,
$${R_{-1}(T )}\leq max\{F_1(n),F_2(n),F_3(n)\},$$
where
\begin{equation*}
{F_1(n)}=
\left\{
  \begin{array}{ll}
   \frac{3n+1}{16}+ \frac{1}{144}\frac{31n+53}{3}& \hbox{if $n =1$ mod 3,} \\
    \frac{3n+1}{16}+ \frac{1}{144}\left(\frac{31n+22}{3}+9\right)& \hbox{if $n =2$ mod 3,} \\
   \frac{3n+1}{16}+ \frac{1}{144}\left(\frac{31n-9}{3}+18\right)& \hbox{if $n =0$ mod 3.} \\
    \end{array}
\right.
\end{equation*}
$$F_2(n)=\frac{3n+1}{16}+ \frac{1}{144}max\{11n - N_4 - 2k + 10, k = 0, 1, 2\}$$
with $N_4$ being the minimum integer solution of $n_4$ of the following system:
\begin{equation*}
\left\{
  \begin{array}{ll}
   n_3 + 2n_4 + 2 = n_1 \\
    2n_1 + n_3 + n_4 = n - k \\
   n_3 \leq 2n_4 + 2 \\
    \end{array}
\right.
\end{equation*}
and
$$F_3(n)=\frac{3n+1}{16}+ \frac{1}{144}max\{4n + 19N_1 + 5k + 4, k = 0, 1, 2\}$$
with $N_1$ being the minimum integer solution of $n_1$ of the following system:
\begin{equation*}
\left\{
  \begin{array}{ll}
  n_3 + 2n_4 + 2 = n_1 \\
    2n_1 + n_3 + n_4 = n - k \\
  n_3 \geq 2n_4 + 2 \\
  n_4 \geq 1.
    \end{array}
\right.
\end{equation*}
\end{theorem}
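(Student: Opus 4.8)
The plan is to turn the inequality into a constrained optimization over the degree sequence and the edge–type counts of $T$, and then to reduce, by local transformations, to a short list of candidate extremal trees. For a chemical tree $T$ of order $n$, let $n_i$ ($1\le i\le 4$) be the number of vertices of degree $i$ and let $m_{ij}$ ($1\le i\le j\le 4$) be the number of edges whose endpoints have degrees $i$ and $j$, so that
$$R_{-1}(T)=\sum_{1\le i\le j\le 4}\frac{m_{ij}}{ij}.$$
The identities $\sum_i n_i=n$ and $\sum_i i\,n_i=2(n-1)$ give $n_1=n_3+2n_4+2$ and $n_2=n-2-2n_3-3n_4$, so a degree sequence is pinned down by $(n_3,n_4)$ with $n_3,n_4\ge 0$ and $2n_3+3n_4\le n-2$; counting edge–ends at the vertices of each degree gives the further relations $2m_{ii}+\sum_{j\ne i}m_{ij}=i\,n_i$, and $m_{11}=0$ once $n>2$.

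First I would reduce to a canonical extremal shape. Since $\tfrac1{ij}$ is strictly decreasing in each of $i$ and $j$, $R_{-1}$ is increased by moving an edge from a heavy pair of endpoints to a lighter one whenever a tree-preserving edge–rotation allows it; each such move changes $R_{-1}$ by a difference $\tfrac1{ab}-\tfrac1{cd}$ whose sign one checks directly. Iterating these moves I expect to reach a caterpillar-type tree in which every branch vertex has degree $3$ or $4$, no two branch vertices are adjacent, each branch vertex is surrounded by pendant edges and by internal paths of degree-$2$ vertices, and no pendant edge is wasted on a branch vertex while a degree-$2$ host is available. For such a tree $R_{-1}(T)$ is a function of $(n_1,n_2,n_3,n_4)$ together with the split of $m_{12},m_{13},m_{14},m_{22},m_{23},m_{24}$, and maximality forces that split to place as much weight as possible on the lightest available edge types subject to the incidence equations above.

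Next I would carry out the resulting finite optimization. With the canonical shape fixed, $R_{-1}(T)$ is an explicit linear function of $(n_3,n_4)$ plus $O(1/n)$ correction terms coming from the end-edges of the degree-$2$ padding paths. Maximizing the dominant linear term over the polytope $\{n_3,n_4\ge 0,\ 2n_3+3n_4\le n-2\}$ isolates a short list of candidate degree sequences; among these the $O(1/n)$ corrections and the requirement that all $n_i$ be nonnegative integers of the correct parity decide which is optimal for a given $n$. The three outcomes $F_1,F_2,F_3$ arise from the trichotomy visible in the systems in the statement, namely the branch–vertex regimes $n_3\le 2n_4+2$ and $n_3\ge 2n_4+2$ (the latter with $n_4\ge 1$), furnishing $F_2(n)$ and $F_3(n)$, together with the boundary case in which $n_4$ is forced to its extreme value, furnishing $F_1(n)$. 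The residues $k=0,1,2$ and the dependence on $n\bmod 3$ record how many extra degree-$2$ vertices must be inserted to repair parity and nonnegativity, and $N_1,N_4$ are the minimal integer solutions of those small linear systems; taking the larger of $F_1,F_2,F_3$ yields $R_{-1}(T)\le\max\{F_1(n),F_2(n),F_3(n)\}$.

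The hard part will be the reduction in the second step: showing that the edge–rotations genuinely terminate at the caterpillar-type canonical form without ever decreasing $R_{-1}$, and that no ``exotic'' chemical tree — one with adjacent branch vertices, with several pendant edges hung on a single degree-$4$ vertex, or with a pendant placed on a branch vertex in preference to a degree-$2$ vertex — can beat all three candidates. The delicate point is that a rotation typically alters the degrees of the two endpoints it moves, hence alters several incident edge weights at once, so the sign of $\tfrac1{ab}-\tfrac1{cd}$ must be tracked simultaneously with the induced change in the degree sequence; this is where the case analysis is heaviest. Once the canonical form is secured, the remaining polytope maximization and integrality rounding are routine, if tedious, and are precisely what produces the branched closed form for the bound.
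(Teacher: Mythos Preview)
The paper does not prove this theorem at all: it is quoted verbatim from Li and Yang~\cite{LY2} and used only as an input to bound the skew Randi\'c energy of chemical trees. There is therefore no proof in the paper to compare your proposal against.

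As for the proposal itself, what you have written is a plan rather than a proof. The setup is sound: the identities $\sum_i n_i=n$, $\sum_i i\,n_i=2(n-1)$, and the incidence relations $2m_{ii}+\sum_{j\ne i}m_{ij}=i\,n_i$ are correct, and reducing $R_{-1}(T)=\sum_{i\le j}m_{ij}/(ij)$ to a linear program over these constraints is the natural route (and is indeed how the original reference proceeds). But the substance of the argument---your ``second step''---is left as an aspiration. You say you \emph{expect} the edge rotations to terminate at a caterpillar-type form, and you correctly identify the difficulty: a single rotation changes two degrees and hence many edge weights simultaneously, so monotonicity of $R_{-1}$ under each move is not automatic. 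Until that reduction is actually carried out, with the full case analysis showing no ``exotic'' configuration survives, you have an outline, not a proof. The final polytope maximization and the emergence of the three regimes $F_1,F_2,F_3$ indexed by $k\in\{0,1,2\}$ and $n\bmod 3$ is, as you say, routine once the canonical form is secured---but securing it is the entire content of the theorem.
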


Combining these bounds with Theorem \ref{THB}, we can obtain the
bounds on skew Randi\'c energy of chemical trees. With the same
argument as before, we can get that the lower bound is sharp, and
the equality in lower bound holds  if and only if $T$ is a star with
2, 3 or 5 vertices and an arbitrary orientation. Also, the equality
in upper bound can not be attained by any chemical tree.

\section{Skew Randi\'c energies of trees}

It is well known that the skew energy of a directed tree is
independent of its orientation \cite{Adiga}.  In this section, we
investigate the  skew Randi\'c energy  of trees. Similarly, we
present a  basic lemma. The proof is  also similar to the proof
given in \cite{Adiga}.
\begin{lemma}\label{th3.1}
Let $D$ be a digraph, and let $D'$ be the digraph obtained from $D$
by reversing the orientations of all the arcs incident with a
particular vertex of $D$. Then $RE_s(D) = RE_s(D')$.\end{lemma}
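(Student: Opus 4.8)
The plan is to realize the passage from $D$ to $D'$ as a similarity transformation of the skew Randi\'c matrix by a diagonal $\pm 1$ matrix, which leaves the spectrum (hence the singular values, hence the energy) unchanged. Let $v$ be the vertex of $D$ all of whose incident arcs are reversed to form $D'$, and let $T=\mathrm{diag}(t_1,\ldots,t_n)$ be the diagonal matrix with $t_k=-1$ if $v_k=v$ and $t_k=1$ otherwise, so that $T=T^{-1}=T^{T}$.

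First I would note that $D$ and $D'$ have the same underlying graph, so the degree sequence $d_1,\ldots,d_n$ is unchanged, and in particular the Randi\'c weights $(d_id_j)^{-1/2}$ attached to the edges are the same for $D$ and for $D'$. Next I would compute the $(i,j)$-entry of $T\,\mathbf{R}_s(D)\,T$, which is $t_i\,(r_s)_{ij}\,t_j$: if neither $i$ nor $j$ corresponds to $v$ this equals $(r_s)_{ij}$, and the arc (if any) between $v_i$ and $v_j$ is unchanged in $D'$; if exactly one of $i,j$ corresponds to $v$ the entry is $-(r_s)_{ij}$, which is exactly the effect of reversing the arc between $v$ and the other endpoint; and if $i=j$ corresponds to $v$ the entry is $(r_s)_{vv}=0$, as it must be. Hence $\mathbf{R}_s(D')=T\,\mathbf{R}_s(D)\,T=T^{-1}\mathbf{R}_s(D)\,T$.

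Finally, since $\mathbf{R}_s(D')$ and $\mathbf{R}_s(D)$ are similar, they have the same characteristic polynomial and therefore the same skew Randi\'c spectrum $\{i\mu_1,\ldots,i\mu_n\}$; consequently $RE_s(D')=\sum_{j=1}^{n}|i\mu_j|=RE_s(D)$. There is essentially no obstacle here: the only point requiring care is to verify that conjugation by $T$ flips the signs of the off-diagonal entries in \emph{exactly} the positions corresponding to arcs at $v$ and nowhere else, and that the vertex degrees (hence the Randi\'c weights) are untouched by the reorientation — both of which are immediate from the definitions.
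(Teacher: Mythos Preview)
Your proof is correct and follows essentially the same approach as the paper: both arguments conjugate $\mathbf{R}_s(D)$ by the diagonal $\pm 1$ matrix that is $-1$ in the position of the distinguished vertex and $1$ elsewhere, observe that this yields $\mathbf{R}_s(D')$, and conclude from (orthogonal) similarity that the spectra---and hence the skew Randi\'c energies---coincide. Your version is slightly more explicit in checking the entries and in noting that the underlying degrees (hence the Randi\'c weights) are unchanged, but the idea is identical.
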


\begin{proof}
Let $\textbf{R}_s(D)$ be the skew Randi\'c matrix of $D$ of order
$n$ with respect to a labeling of its vertex set. Suppose the
orientations of all the arcs incident at vertex $v_i$ of $D$ are
reversed. Let the resulting digraph be $D'$. Then $\textbf{R}_s(D')=
P_i\textbf{R}_s(D)P_i$ where $P_i$ is the diagonal matrix obtained
from the identity matrix of order $n$ by changing the $i-$th
diagonal entry to $-1$. Hence $\textbf{R}_s(D)$ and
$\textbf{R}_s(D')$ are orthogonally similar, and so have the same
eigenvalues, and hence $D$ and $D'$ have the same skew Randi\'c
energy.
\end{proof}

Let $\sigma$ be an orientation of a graph $G$. Let $W$ be a subset
of $V (G)$ and $\overline{W} = V (G)\setminus W$. The orientation
$\tau$ of $G$ obtained from  $\sigma$  by reversing the orientations
of all arcs between $W$ and $\overline{W}$ is said to be obtained
from $G^\sigma$ by a switching with respect to $W$. Moreover, two
oriented graphs $G^\tau$ and $G^\sigma$ of $G$ are said to be {\it
switching-equivalent} if $G^\tau$ can be obtained from $ G^\sigma $
by a switching. From lemma \ref{th3.1}, we know that
\begin{theorem}\label{th4.1}
If $G^\tau $ and $G^\sigma$ are switching-equivalent, then
$Sp_{\textbf{R}_s}(G^\sigma)=Sp_{\textbf{R}_s}(G^\tau).$
\end{theorem}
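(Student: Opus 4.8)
The plan is to reduce Theorem~\ref{th4.1} to Lemma~\ref{th3.1} by showing that a single switching with respect to a vertex subset $W$ can be realized as a composition of the elementary vertex-reversals treated in the lemma. First I would observe that if $\tau$ is obtained from $\sigma$ by switching with respect to $W=\{w_1,\dots,w_k\}$, then reversing the orientations of all arcs incident with $w_1$, then all arcs incident with $w_2$, and so on through $w_k$, produces exactly the orientation $\tau$: an arc with both endpoints in $W$ gets its orientation reversed twice (once for each endpoint) and so is unchanged, an arc with both endpoints in $\overline{W}$ is never touched, and an arc with exactly one endpoint in $W$ is reversed exactly once. This is the combinatorial heart of the argument, and it is where one must be slightly careful to check the parity bookkeeping is correct; I expect this to be the only real obstacle, and it is a mild one.

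Next I would translate this into matrix language. Writing $P_W$ for the diagonal matrix that has $-1$ in the positions indexed by $W$ and $+1$ elsewhere, the composition of elementary reversals corresponds to the product $P_{w_k}\cdots P_{w_1} = P_W$, since the $P_{w_j}$ are commuting diagonal $\pm1$ matrices and the diagonal entry at position $i$ of the product is $-1$ precisely when $v_i$ appears an odd number of times among $w_1,\dots,w_k$, i.e.\ precisely when $v_i\in W$. Hence $\textbf{R}_s(G^\tau) = P_W\,\textbf{R}_s(G^\sigma)\,P_W$. Since $P_W$ is orthogonal (indeed $P_W = P_W^{-1} = P_W^\top$), the matrices $\textbf{R}_s(G^\tau)$ and $\textbf{R}_s(G^\sigma)$ are orthogonally similar, so they have the same spectrum.

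Alternatively, and perhaps more cleanly for the write-up, I would apply Lemma~\ref{th3.1} iteratively: set $D_0 = G^\sigma$ and let $D_j$ be obtained from $D_{j-1}$ by reversing all arcs incident with $w_j$; then $D_k = G^\tau$ by the combinatorial observation above, and Lemma~\ref{th3.1} gives $RE_s(D_{j-1}) = RE_s(D_j)$ for each $j$, hence $RE_s(G^\sigma) = RE_s(G^\tau)$. To get the stronger conclusion about the full spectrum (which is what the theorem states), I would note that the proof of Lemma~\ref{th3.1} actually establishes that the skew Randi\'c matrices are orthogonally similar, not merely that the energies agree, and this similarity is preserved under composition. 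So the spectra coincide. I would then close by remarking that $\tau$ being a switching of $\sigma$ with respect to $W$ is the same relation as $\sigma$ being a switching of $\tau$ with respect to $W$ (the relation is symmetric), so "switching-equivalent" is a genuine equivalence relation and the statement is symmetric in $\sigma$ and $\tau$ as written.
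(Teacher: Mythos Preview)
Your proposal is correct and follows exactly the route the paper intends: the paper simply states that Theorem~\ref{th4.1} follows from Lemma~\ref{th3.1}, and your argument spells out precisely this reduction by decomposing a switching with respect to $W$ into successive single-vertex reversals and invoking the orthogonal similarity $\textbf{R}_s(G^\tau)=P_W\textbf{R}_s(G^\sigma)P_W$ established in the lemma's proof. Your observation that Lemma~\ref{th3.1} actually yields equality of spectra (not just of energies) is exactly what is needed, and the paper relies on this implicitly.
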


\begin{lemma}\cite{Adiga}\label{th3.2}
Let $T$ be a labeled directed tree rooted at vertex $v$. It is
possible, through reversing the orientations of all arcs incident at
some vertices other than $v$, to transform $T$ to a directed tree
$T'$ in which the orientations of all the arcs go from low labels to
high labels.
\end{lemma}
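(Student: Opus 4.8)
The plan is to reduce the statement to the (easy) solvability of a linear system over $\mathbb{F}_2$ that records which vertices must be switched. For each edge $e=\{x,y\}$ of $T$ put $\ell(e)=0$ if the arc of $T$ on $e$ already points from the endpoint with the smaller label to the one with the larger label, and $\ell(e)=1$ otherwise. The operation of Lemma~\ref{th3.1} --- reversing all arcs incident with a vertex $u$ --- flips the orientation of precisely the edges incident with $u$. Hence if we switch at the vertices of a set $S\subseteq V(T)\setminus\{v\}$ and encode this by $\epsilon_u=1$ for $u\in S$ and $\epsilon_u=0$ otherwise (so in particular $\epsilon_v=0$), then after switching the arc on an edge $e=\{x,y\}$ runs from low label to high label if and only if $\epsilon_x+\epsilon_y\equiv\ell(e)\pmod 2$. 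Thus it suffices to find $\epsilon\colon V(T)\to\mathbb{F}_2$ with $\epsilon_v=0$ and $\epsilon_x+\epsilon_y=\ell(\{x,y\})$ for every edge $\{x,y\}$ of $T$.

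To produce such an $\epsilon$, root $T$ at $v$ and order the vertices by non-decreasing distance from $v$. Set $\epsilon_v=0$; for a non-root vertex $w$ with parent $p(w)$ joined to it by the edge $e_w$, set $\epsilon_w:=\epsilon_{p(w)}+\ell(e_w)\pmod 2$. Processing vertices in the chosen order guarantees $\epsilon_{p(w)}$ is already defined when $\epsilon_w$ is; since every non-root vertex has a unique parent and $w\mapsto e_w$ is a bijection between non-root vertices and edges of $T$, each value $\epsilon_w$ is assigned exactly once and, by construction, $\epsilon_x+\epsilon_y=\ell(\{x,y\})$ holds for every edge. Taking $S=\{w:\epsilon_w=1\}\subseteq V(T)\setminus\{v\}$ and performing the switchings at the vertices of $S$ in any order (they commute, since conjugating by the diagonal sign matrices $P_i$ of Lemma~\ref{th3.1} commutes, or directly: an edge with both endpoints in $S$ is flipped twice, and one with exactly one endpoint in $S$ is flipped once) yields a directed tree $T'$ in which every arc goes from a low label to a high label.

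An equivalent and perhaps more hands-on phrasing is induction on $|V(T)|$: when $|V(T)|\ge 2$ choose a leaf $u\ne v$, apply the induction hypothesis to the tree $T-u$ rooted at $v$ (this uses switchings at vertices of $V(T)\setminus\{u,v\}$), and then, since $u$ is a leaf, a single optional switching at $u$ repairs the orientation of the unique edge at $u$ without disturbing any other edge. Either way, the only point that really needs care --- and the crux of the argument --- is that $T$ is acyclic: this is exactly what makes the prescribed $\mathbb{F}_2$-system consistent (there is no cycle along which the increments $\ell(e)$ could sum to something nonzero) and what lets us impose the boundary condition $\epsilon_v=0$ for free. The remaining facts (a switching at a leaf affects only one edge; switchings commute) are immediate.
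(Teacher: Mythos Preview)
Your proof is correct. Note, however, that the paper does not supply its own proof of this lemma: it is quoted verbatim from \cite{Adiga} and stated without argument, so there is no in-paper proof to compare against. Your second phrasing---induction on $|V(T)|$ by deleting a leaf $u\ne v$, applying the hypothesis to $T-u$, and then optionally switching at $u$---is the standard argument (and is the one given in \cite{Adiga}). Your first phrasing via the $\mathbb{F}_2$ system $\epsilon_x+\epsilon_y=\ell(\{x,y\})$ with boundary condition $\epsilon_v=0$ is a clean repackaging of the same idea: it makes explicit why acyclicity is the crux (no cycle constraint to obstruct solvability) and why the root can be exempted from switching (one free degree of freedom per component). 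Either version is fine here.
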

We can also  show that the skew energy of a directed tree is
independent of its orientation by using Lemma \ref{th3.1} and Lemma
\ref{th3.2}.
\begin{theorem}\label{th3.3}
The skew Randi\'c energy of a directed tree is independent of its
orientation.
\end{theorem}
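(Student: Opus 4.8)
The plan is to reduce the claim to the two lemmas just established, following the classical argument for skew energy of trees. Concretely, I would take an arbitrary directed tree $T$ with a fixed labeling of its vertices $v_1,\ldots,v_n$, and pick any vertex, say $v_1$, as a root. By Lemma \ref{th3.2}, there is a sequence of operations, each reversing the orientations of all arcs incident with a single non-root vertex, that transforms $T$ into a directed tree $T'$ in which every arc is oriented from the lower-labeled endpoint to the higher-labeled endpoint. Crucially, this target orientation $T'$ depends only on the labeling and the underlying tree, not on the original orientation of $T$.

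Next I would invoke Lemma \ref{th3.1} repeatedly: each single-vertex reversal preserves the skew Randi\'c energy, so after the whole sequence we get $RE_s(T) = RE_s(T')$. Since this holds for \emph{any} starting orientation of the underlying tree (with the labeling held fixed), any two orientations $\sigma$ and $\sigma'$ of the same labeled tree satisfy $RE_s(T^\sigma) = RE_s(T') = RE_s(T^{\sigma'})$. Finally, relabeling the vertices of a directed graph is a permutation similarity on $\textbf{R}_s$, which leaves the spectrum — and hence the skew Randi\'c energy — unchanged, so the choice of labeling is immaterial. Putting these together yields that $RE_s$ is the same for every orientation of the tree, which is the assertion.

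The only genuine subtlety is verifying that Lemma \ref{th3.1} applies verbatim in the weighted (Randi\'c) setting: the conjugating matrix $P_i$ there is a $\pm1$ diagonal matrix, and $\textbf{R}_s(D') = P_i\textbf{R}_s(D)P_i$ holds because flipping the sign of the $i$-th row and column negates exactly the entries $(r_s)_{ij}$ and $(r_s)_{ji}$ for edges at $v_i$, which is precisely the effect of reversing those arcs — the Randi\'c weights $(d_id_j)^{-1/2}$ are unchanged since degrees are unaffected by reorientation. This is exactly what Lemma \ref{th3.1} asserts, so there is really nothing new to prove here; the main obstacle is essentially bookkeeping, namely being careful that the intermediate digraphs in the reduction of Lemma \ref{th3.2} are still honestly obtained by the single-vertex reversal operation to which Lemma \ref{th3.1} applies. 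I expect the proof to be short, essentially a two-sentence combination of Lemma \ref{th3.1} and Lemma \ref{th3.2}, mirroring the argument in \cite{Adiga}.

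\begin{proof}
Fix a labeling $v_1,\ldots,v_n$ of the vertices of the underlying tree and regard $v_1$ as the root. Let $\sigma$ be any orientation of $T$. By Lemma \ref{th3.2}, applied to $T^\sigma$, there is a finite sequence of operations, each reversing the orientations of all arcs incident at some vertex $v_i$ with $i\neq 1$, that transforms $T^\sigma$ into the directed tree $T'$ in which every arc goes from the lower-labeled endpoint to the higher-labeled endpoint. By Lemma \ref{th3.1}, each such operation preserves the skew Randi\'c energy, so $RE_s(T^\sigma) = RE_s(T')$. Since the orientation $T'$ depends only on the underlying tree and the chosen labeling, and not on $\sigma$, it follows that $RE_s(T^\sigma) = RE_s(T^{\sigma'})$ for any two orientations $\sigma,\sigma'$ of $T$. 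Hence the skew Randi\'c energy of a directed tree is independent of its orientation.
\end{proof}
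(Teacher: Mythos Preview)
Your argument is correct and is precisely the approach the paper has in mind: the paper omits the proof, stating that it is similar to the proof in \cite{Adiga}, which is exactly the combination of Lemma~\ref{th3.1} and Lemma~\ref{th3.2} you carry out. There is nothing to add.
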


\begin{coro}\label{cor3.4}
The skew Randi\'c energy of a directed tree is the same as the Randi\'c energy of its underlying tree.
\end{coro}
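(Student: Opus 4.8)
The plan is to compute the skew Randi\'c energy for one conveniently chosen orientation and to recognize it, via singular values, as the Randi\'c energy of the underlying tree. Since $T$ is a tree it is bipartite; fix a bipartition $V(T)=X\cup Y$. By Theorem~\ref{th3.3} the number $RE_s(T^\sigma)$ does not depend on $\sigma$, so I may work with the orientation $\sigma_0$ that directs every edge of $T$ from its endpoint in $X$ to its endpoint in $Y$ (this is visibly an orientation of $T$; equivalently, label the vertices of $X$ before those of $Y$, and since $T$ has no edge inside $X$ or inside $Y$, the orientation ``all arcs go from low to high labels'' of Lemma~\ref{th3.2} is precisely $\sigma_0$).

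Listing the vertices of $X$ first, let $\mathbf{B}_0$ be the $(0,1)$ biadjacency matrix of $T$ and $\textbf{D}=\mathrm{diag}(\textbf{D}_X,\textbf{D}_Y)$ the degree matrix, and set $\mathbf{B}=\textbf{D}_X^{-1/2}\mathbf{B}_0\textbf{D}_Y^{-1/2}$. Because $\sigma_0$ is a source‑to‑sink orientation, the skew Randi\'c matrix and the Randi\'c matrix share the same (sign‑free) off‑diagonal block:
\begin{equation*}
\textbf{R}_s(T^{\sigma_0})=\begin{pmatrix}0&\mathbf{B}\\ -\mathbf{B}^{T}&0\end{pmatrix},\qquad \textbf{R}(T)=\begin{pmatrix}0&\mathbf{B}\\ \mathbf{B}^{T}&0\end{pmatrix}.
\end{equation*}
Using $\textbf{R}_s(T^{\sigma_0})^{T}=-\textbf{R}_s(T^{\sigma_0})$ and $\textbf{R}(T)^{T}=\textbf{R}(T)$, a direct block computation gives
\begin{equation*}
\textbf{R}_s(T^{\sigma_0})^{T}\textbf{R}_s(T^{\sigma_0})=-\textbf{R}_s(T^{\sigma_0})^{2}=\begin{pmatrix}\mathbf{B}\mathbf{B}^{T}&0\\ 0&\mathbf{B}^{T}\mathbf{B}\end{pmatrix}=\textbf{R}(T)^{2}=\textbf{R}(T)^{T}\textbf{R}(T),
\end{equation*}
so $\textbf{R}_s(T^{\sigma_0})$ and $\textbf{R}(T)$ have exactly the same singular values.

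To finish, the skew Randi\'c energy is by definition the sum of the singular values of $\textbf{R}_s(T^{\sigma_0})$, while the Randi\'c energy $RE(T)$ is $\sum_i|\lambda_i|$ over the eigenvalues of the symmetric matrix $\textbf{R}(T)$, which equals the sum of the singular values of $\textbf{R}(T)$. Hence $RE_s(T^{\sigma_0})=RE(T)$, and by the orientation‑independence of Theorem~\ref{th3.3} this value is $RE_s(T^\sigma)$ for every orientation $\sigma$ of $T$. The one point that needs care — and the reason Theorem~\ref{th3.3} is invoked at the start rather than computing with an arbitrary $\sigma$ — is that for a non‑source‑to‑sink orientation the off‑diagonal block of $\textbf{R}_s$ picks up signs, and then its Gram matrix need not agree with $\mathbf{B}\mathbf{B}^{T}$; passing to $\sigma_0$ by vertex reversals erases these signs while leaving the energy unchanged, which is exactly what makes the singular‑value comparison go through.
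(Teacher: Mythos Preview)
Your proof is correct. The paper omits its own argument, referring instead to the analogous proof in \cite{Adiga}, and your approach is precisely that one transposed to the Randi\'c-weighted setting: invoke Theorem~\ref{th3.3} to reduce to the canonical bipartite orientation, write $\textbf{R}_s(T^{\sigma_0})$ and $\textbf{R}(T)$ in block form sharing the off-diagonal block $\mathbf{B}$, and conclude by equality of singular values---which is also the content of Lemma~\ref{lem2.1} specialized to a tree.
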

We omit the proofs of Theorem \ref{th3.3} and Corollary
\ref{cor3.4}, since they are similar to the proofs of Theorem 3.3
and Corollary 3.4 in \cite{Adiga}.

\section{Graphs with $Sp_{\textbf{R}_s}(G^\sigma)=\mathbf{i}Sp_{\textbf{R}}(G)$}

The relationship between $Sp_s(G^\sigma)$ and $\mathbf{i}Sp(G)$ has
been concerned in \cite{SS}. Similarly, we concentrate on the
relationship between $Sp_{\textbf{R}_s}(G^\sigma)$ and
$\mathbf{i}Sp_{\textbf{R}}(G)$, and we obtain some analogous
results. The following two lemmas given in \cite{SS} will be used.
\begin{lemma}\cite{SS}\label{lem2.1}
Let $\mathbf{A}=\left( {\begin{array}{*{20}{c}}
\mathbf{0}&X\\
{{X^T}}&\mathbf{0}
\end{array}} \right)$  and $\mathbf{B}=\left( {\begin{array}{*{20}{c}}
\mathbf{0}&X\\
{{-X^T}}&\mathbf{0}
\end{array}} \right)$ be two real matrices. Then
$Sp(\mathbf{B}) = \mathbf{i}Sp(\mathbf{A})$.
\end{lemma}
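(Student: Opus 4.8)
The plan is to prove the identity by exhibiting an explicit diagonal similarity between $\mathbf{A}$ and $\mathbf{i}\,\mathbf{B}$, and then to invoke the elementary fact that a matrix with the block form of $\mathbf{A}$ has spectrum symmetric about the origin. Write $X$ as an $m\times n$ real matrix, so that $\mathbf{A}$ and $\mathbf{B}$ are both real $(m+n)\times(m+n)$ matrices, with $\mathbf{A}$ symmetric and $\mathbf{B}$ skew-symmetric. Set
$$U=\begin{pmatrix}\mathbf{I}_m & \mathbf{0}\\ \mathbf{0} & \mathbf{i}\,\mathbf{I}_n\end{pmatrix},\qquad U^{-1}=\begin{pmatrix}\mathbf{I}_m & \mathbf{0}\\ \mathbf{0} & -\mathbf{i}\,\mathbf{I}_n\end{pmatrix};$$
this $U$ is invertible (in fact unitary), which is all that is needed.

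First I would carry out the block multiplication. A direct computation gives
$$\mathbf{A}U=\begin{pmatrix}\mathbf{0} & \mathbf{i}X\\ X^T & \mathbf{0}\end{pmatrix},\qquad U^{-1}\mathbf{A}U=\begin{pmatrix}\mathbf{0} & \mathbf{i}X\\ -\mathbf{i}X^T & \mathbf{0}\end{pmatrix}=\mathbf{i}\,\mathbf{B},$$
so $\mathbf{A}$ and $\mathbf{i}\,\mathbf{B}$ are similar and hence $Sp(\mathbf{A})=Sp(\mathbf{i}\,\mathbf{B})=\mathbf{i}\,Sp(\mathbf{B})$, i.e. $Sp(\mathbf{B})=-\mathbf{i}\,Sp(\mathbf{A})$ as multisets. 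Next I would record that $Sp(\mathbf{A})=-Sp(\mathbf{A})$: with $J=\mathrm{diag}(\mathbf{I}_m,-\mathbf{I}_n)$ one has $J^2=\mathbf{I}_{m+n}$ and $J\mathbf{A}J=-\mathbf{A}$, so $\mathbf{A}$ and $-\mathbf{A}$ are similar. (Equivalently, if $(u^T,v^T)^T$ is an eigenvector of $\mathbf{A}$ for $\lambda$, then $(u^T,-v^T)^T$ is one for $-\lambda$.) Combining the two facts, $Sp(\mathbf{B})=-\mathbf{i}\,Sp(\mathbf{A})=\mathbf{i}\,Sp(\mathbf{A})$, which is the assertion.

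I do not expect a genuine obstacle here: the proof is two short block-matrix multiplications plus one symmetry observation. The only points requiring any care are keeping track of the block sizes when $X$ is rectangular, and handling the sign so that one ends at $\mathbf{i}\,Sp(\mathbf{A})$ rather than $-\mathbf{i}\,Sp(\mathbf{A})$ --- which is exactly what the symmetry of $Sp(\mathbf{A})$ fixes. If one prefers to bypass the symmetry step, an alternative is a Schur-complement computation of characteristic polynomials: assuming $m\le n$ (the other case being symmetric),
$$\det(\lambda\mathbf{I}_{m+n}-\mathbf{A})=\lambda^{\,n-m}\det(\lambda^2\mathbf{I}_m-XX^T),\qquad \det(\lambda\mathbf{I}_{m+n}-\mathbf{B})=\lambda^{\,n-m}\det(\lambda^2\mathbf{I}_m+XX^T),$$
and since $XX^T$ is positive semidefinite its eigenvalues $\mu\ge 0$ yield eigenvalues $\pm\sqrt{\mu}$ of $\mathbf{A}$ and $\pm\mathbf{i}\sqrt{\mu}$ of $\mathbf{B}$, with matching multiplicities (the zero eigenvalue included), so again $Sp(\mathbf{B})=\mathbf{i}\,Sp(\mathbf{A})$.
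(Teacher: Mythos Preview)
Your argument is correct. The similarity $U^{-1}\mathbf{A}U=\mathbf{i}\,\mathbf{B}$ with $U=\mathrm{diag}(\mathbf{I}_m,\mathbf{i}\,\mathbf{I}_n)$ is computed correctly, the sign-symmetry $Sp(\mathbf{A})=-Sp(\mathbf{A})$ via $J=\mathrm{diag}(\mathbf{I}_m,-\mathbf{I}_n)$ is the standard observation for block-antidiagonal matrices, and the combination $Sp(\mathbf{B})=-\mathbf{i}\,Sp(\mathbf{A})=\mathbf{i}\,Sp(\mathbf{A})$ is valid as an equality of multisets. The alternative Schur-complement computation you sketch is also fine.

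As for comparison: the paper does not give its own proof of this lemma at all---it merely quotes the statement from Shader and So \cite{SS} and uses it as a black box. So there is no ``paper's proof'' to compare against here. For what it is worth, the argument in \cite{SS} is essentially the diagonal-similarity trick you used (conjugation by $\mathrm{diag}(\mathbf{I}_m,\mathbf{i}\,\mathbf{I}_n)$ together with the symmetry of the spectrum of $\mathbf{A}$), so your first approach matches the original source. Your Schur-complement alternative is a slightly different route that has the mild advantage of reading off the eigenvalues of both $\mathbf{A}$ and $\mathbf{B}$ explicitly from the singular values of $X$, which is in fact how the lemma gets applied later in the paper (e.g.\ in the proofs of Theorem~\ref{th2.5} and Corollary~\ref{cor1}, where one compares singular values of $\mathbf{X}$ and $\mathbf{Y}$).
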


Let $|\mathbf{X}|$ denote the matrix whose entries are the absolute
values of the corresponding entries in $\mathbf{X}$. For real
matrices $\mathbf{X}$ and $\mathbf{Y}$, $\mathbf{X} \le \mathbf{Y}$
means that $\mathbf{Y} - \mathbf{X}$ has nonnegative entries.
$\rho(\mathbf{X})$ denotes the spectral radius of a square matrix
$\mathbf{X}$.
\begin{lemma}\cite{SS}\label{lem2.3}
Let $ \mathbf{A}$ be an irreducible nonnegative matrix and
$\mathbf{B}$ be a real positive semi-definite matrix such that
$|\mathbf{B}| \le \mathbf{A}$ (entry-wise) and $\rho(\mathbf{A})
=\rho(\mathbf{B})$. Then $\mathbf{A} = \mathbf{DBD}$ for some real
matrix $\mathbf{D}$ such that $|\mathbf{D}| = \mathbf{I}$, the
identity matrix.
\end{lemma}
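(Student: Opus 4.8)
The plan is to derive this as the equality case of the Perron--Frobenius comparison principle (Wielandt's theorem), carried out explicitly. Since $\mathbf{B}$ is real positive semi-definite, its spectral radius $\rho := \rho(\mathbf{B}) = \rho(\mathbf{A})$ is its largest eigenvalue, so I can fix a \emph{real} unit vector $\mathbf{v}$ with $\mathbf{B}\mathbf{v} = \rho\mathbf{v}$. The order-$1$ case is immediate, so I assume $n \geq 2$, in which case $\rho > 0$ because an irreducible nonnegative matrix of order at least two has positive spectral radius. Writing $|\mathbf{v}|$ for the entrywise absolute value, the triangle inequality together with $|\mathbf{B}| \leq \mathbf{A}$ gives, entrywise,
\[
\rho\,|\mathbf{v}| \;=\; |\mathbf{B}\mathbf{v}| \;\leq\; |\mathbf{B}|\,|\mathbf{v}| \;\leq\; \mathbf{A}\,|\mathbf{v}|.
\]

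First I would upgrade this chain to a chain of equalities and show $|\mathbf{v}| > 0$. Let $\mathbf{z} > 0$ be the left Perron eigenvector of $\mathbf{A}$, so $\mathbf{z}^{T}\mathbf{A} = \rho\,\mathbf{z}^{T}$. Pairing the inequality $\mathbf{A}|\mathbf{v}| \geq \rho|\mathbf{v}|$ with $\mathbf{z}$ yields $\mathbf{z}^{T}(\mathbf{A}|\mathbf{v}| - \rho|\mathbf{v}|) = 0$, and since $\mathbf{A}|\mathbf{v}| - \rho|\mathbf{v}| \geq 0$ while $\mathbf{z} > 0$, we must have $\mathbf{A}|\mathbf{v}| = \rho|\mathbf{v}|$. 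Thus $|\mathbf{v}|$ is a nonnegative eigenvector of the irreducible matrix $\mathbf{A}$ for $\rho$, hence strictly positive by Perron--Frobenius. Feeding this back, the whole chain collapses to equalities, so in particular $|\mathbf{B}|\,|\mathbf{v}| = \mathbf{A}\,|\mathbf{v}|$ and $|\mathbf{B}\mathbf{v}| = |\mathbf{B}|\,|\mathbf{v}|$.

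Next I would read off the structure. From $(\mathbf{A} - |\mathbf{B}|)|\mathbf{v}| = 0$ with $\mathbf{A} - |\mathbf{B}| \geq 0$ and $|\mathbf{v}| > 0$ we get $|\mathbf{B}| = \mathbf{A}$. Looking at the $i$-th coordinate of $|\mathbf{B}\mathbf{v}| = |\mathbf{B}|\,|\mathbf{v}|$,
\[
\Bigl| \sum_{j} B_{ij} v_{j} \Bigr| \;=\; \sum_{j} |B_{ij}|\,|v_{j}| \;=\; \sum_{j} |B_{ij} v_{j}|,
\]
which is the equality case of the triangle inequality for the real numbers $(B_{ij}v_{j})_{j}$; hence all the nonzero ones share a common sign, necessarily the sign of their sum $\rho v_{i}$, i.e.\ $\mathrm{sign}(v_{i})$ (recall $\rho>0$ and $v_i\neq 0$). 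So whenever $B_{ij} \neq 0$ we have $\mathrm{sign}(B_{ij}) = \mathrm{sign}(v_{i})\,\mathrm{sign}(v_{j})$, whence $B_{ij} = \mathrm{sign}(v_{i})\,\mathrm{sign}(v_{j})\,|B_{ij}| = \mathrm{sign}(v_{i})\,\mathrm{sign}(v_{j})\,A_{ij}$; and when $B_{ij}=0$ then $A_{ij} = |B_{ij}| = 0$, so the same identity holds trivially. Putting $\mathbf{D} := \mathrm{diag}\bigl(\mathrm{sign}(v_{1}),\ldots,\mathrm{sign}(v_{n})\bigr)$ --- which is real, satisfies $|\mathbf{D}| = \mathbf{I}$, and obeys $\mathbf{D}^{2} = \mathbf{I}$ --- the identity becomes $\mathbf{B} = \mathbf{D}\mathbf{A}\mathbf{D}$, equivalently $\mathbf{A} = \mathbf{D}\mathbf{B}\mathbf{D}$, as required.

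I expect the main obstacle to be the first reduction: promoting the entrywise inequalities to equalities and deducing $|\mathbf{v}| > 0$. This is precisely where irreducibility of $\mathbf{A}$ is essential --- for a reducible $\mathbf{A}$ the vector $|\mathbf{v}|$ could have zero entries and the conclusion can fail --- and it is also where positive semi-definiteness of $\mathbf{B}$ is used: it guarantees that $\rho(\mathbf{B})$ is actually an eigenvalue of $\mathbf{B}$ (rather than the modulus of a negative one), so that one may start from $\mathbf{B}\mathbf{v} = +\rho\mathbf{v}$ and pin the signs down without an extra global sign ambiguity. Once that reduction is in place, the remainder is a routine analysis of the triangle inequality.
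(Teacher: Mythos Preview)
The paper does not supply its own proof of this lemma: it is quoted from \cite{SS} and used as a black box. Your argument is correct and is essentially the standard proof of the equality case in Wielandt's comparison theorem, specialized to a real positive semi-definite $\mathbf{B}$. Each step checks out: the eigenvector $\mathbf{v}$ exists and is real because a real positive semi-definite matrix is symmetric with nonnegative spectrum; the pairing with the left Perron vector forces $\mathbf{A}|\mathbf{v}|=\rho|\mathbf{v}|$; irreducibility then gives $|\mathbf{v}|>0$; and the equality case of the triangle inequality for real numbers yields the signature relation and hence $\mathbf{A}=\mathbf{D}\mathbf{B}\mathbf{D}$ with $|\mathbf{D}|=\mathbf{I}$. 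Your remark that positive semi-definiteness is what guarantees $\rho(\mathbf{B})$ is an eigenvalue (rather than merely $|\lambda|$ for some eigenvalue $\lambda$) is exactly the point of that hypothesis in this formulation.
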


\begin{theorem}\label{th2.2}
$G$ is a bipartite graph if and only if there is an orientation
$\sigma$ such that
$Sp_{\textbf{R}_s}(G^\sigma)=\mathbf{i}Sp_{\textbf{R}}(G)$.
\end{theorem}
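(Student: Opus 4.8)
The plan is to prove both directions of the equivalence, with the "only if" (bipartite $\Rightarrow$ existence of a good orientation) direction being the easier one and the "if" direction (existence of a good orientation $\Rightarrow$ bipartite) being where the real work lies, using Lemma~\ref{lem2.3}.

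\textbf{Sufficiency of bipartiteness.} Suppose $G$ is bipartite with parts $U$ and $V$. After labeling the vertices of $U$ first and those of $V$ second, the Randi\'c matrix takes the block form $\textbf{R}(G)=\left(\begin{smallmatrix}\mathbf{0}&X\\ X^T&\mathbf{0}\end{smallmatrix}\right)$, where $X$ records the Randi\'c weights $(d_id_j)^{-1/2}$ of the edges between $U$ and $V$. Now choose the orientation $\sigma$ that directs every edge from its endpoint in $U$ to its endpoint in $V$. Then the skew Randi\'c matrix is exactly $\textbf{R}_s(G^\sigma)=\left(\begin{smallmatrix}\mathbf{0}&X\\ -X^T&\mathbf{0}\end{smallmatrix}\right)$, since entries with $v_i\to v_j$ get $+(d_id_j)^{-1/2}$ and entries with $v_j\to v_i$ get $-(d_id_j)^{-1/2}$. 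Applying Lemma~\ref{lem2.1} with $\mathbf{A}=\textbf{R}(G)$ and $\mathbf{B}=\textbf{R}_s(G^\sigma)$ yields $Sp_{\textbf{R}_s}(G^\sigma)=\mathbf{i}\,Sp_{\textbf{R}}(G)$ immediately.

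\textbf{Necessity.} For the converse, suppose $\sigma$ is an orientation with $Sp_{\textbf{R}_s}(G^\sigma)=\mathbf{i}\,Sp_{\textbf{R}}(G)$; I want to conclude $G$ is bipartite. First reduce to the connected case: the skew Randi\'c matrix is block-diagonal over components and the hypothesis passes to each component, so assume $G$ is connected, whence $\textbf{R}(G)$ is irreducible and nonnegative with spectral radius $\rho(\textbf{R}(G))=1$ (the largest Randi\'c eigenvalue, with Perron vector $\alpha^T=(\sqrt{d_1},\dots,\sqrt{d_n})$, as recalled in the excerpt before Theorem~\ref{th0}). Set $\mathbf{A}=\textbf{R}(G)$ and consider $\mathbf{B}$, a suitable Hermitian/real form built from $\textbf{R}_s(G^\sigma)$. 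The key point: from $Sp_{\textbf{R}_s}(G^\sigma)=\mathbf{i}\,Sp_{\textbf{R}}(G)$ we get that the eigenvalues of $\mathbf{R}_s(G^\sigma)$, though purely imaginary, have moduli equal to $|Sp_{\textbf{R}}(G)|$; in particular $\rho(i\textbf{R}_s(G^\sigma))=1=\rho(\textbf{R}(G))$, and $\textbf{R}_s(G^\sigma)^T\textbf{R}_s(G^\sigma)$ is positive semi-definite with $|\textbf{R}_s(G^\sigma)|=\textbf{R}(G)=\mathbf{A}$ entry-wise. I would apply Lemma~\ref{lem2.3} (in the appropriate real-symmetric incarnation, exactly as done for the unweighted skew-adjacency case in \cite{SS}) to conclude $\textbf{R}(G)=\mathbf{D}\,\textbf{R}_s(G^\sigma)\,\mathbf{D}$ with $|\mathbf{D}|=\mathbf{I}$ — more precisely, that $\textbf{R}_s(G^\sigma)$ is, up to a signature conjugation, the matrix $\left(\begin{smallmatrix}\mathbf{0}&X\\ -X^T&\mathbf{0}\end{smallmatrix}\right)$ forcing a two-coloring of $V(G)$. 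Reading off that signature gives a partition of $V(G)$ into two classes such that every edge of $G^\sigma$ runs between them, i.e. $G$ is bipartite.

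\textbf{The main obstacle} will be making the necessity argument precise: translating the spectral hypothesis $Sp_{\textbf{R}_s}(G^\sigma)=\mathbf{i}\,Sp_{\textbf{R}}(G)$ into the exact hypotheses of Lemma~\ref{lem2.3}, since that lemma is stated for a nonnegative matrix $\mathbf{A}$ and a real positive semi-definite $\mathbf{B}$, whereas $\textbf{R}_s(G^\sigma)$ is skew-symmetric. One needs the standard device (used in \cite{SS} for the unweighted case) of passing to $2n\times 2n$ symmetrized matrices, or of squaring, to land in the lemma's setting, and then of carefully extracting from the conclusion $\mathbf{A}=\mathbf{DBD}$ the sign pattern that exhibits the bipartition. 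Once that bridge is built, the combinatorial conclusion that $G$ is bipartite is routine, and the whole argument mirrors the proof of the analogous statement for skew energy, just carrying the Randi\'c weights $(d_id_j)^{-1/2}$ along unchanged.
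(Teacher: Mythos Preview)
Your forward direction (bipartite $\Rightarrow$ existence of $\sigma$) matches the paper's argument exactly.

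For the converse, however, you have taken a wrong turn. The paper's proof is a two-line spectral symmetry argument and does not use Lemma~\ref{lem2.3} at all: since $\textbf{R}_s(G^\sigma)$ is real skew-symmetric, its spectrum consists of purely imaginary numbers occurring in conjugate pairs $\pm i\mu$, so $Sp_{\textbf{R}_s}(G^\sigma)$ is symmetric about the real axis. The hypothesis $Sp_{\textbf{R}_s}(G^\sigma)=\mathbf{i}\,Sp_{\textbf{R}}(G)$ then forces $Sp_{\textbf{R}}(G)$ to be symmetric about $0$, and a graph whose Randi\'c (equivalently, normalized adjacency) spectrum is symmetric about the origin is bipartite. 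That is the entire argument.

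Your proposed route via Lemma~\ref{lem2.3} has concrete problems. First, the conclusion $\textbf{R}(G)=\mathbf{D}\,\textbf{R}_s(G^\sigma)\,\mathbf{D}$ with $|\mathbf{D}|=\mathbf{I}$ is impossible as written: signature conjugation preserves skew-symmetry, so the right side is skew-symmetric while the left side is symmetric and nonzero. Second, even after ``squaring'' to get $\mathbf{A}=\textbf{R}(G)^2$ and $\mathbf{B}=-\textbf{R}_s(G^\sigma)^2$, the lemma requires $\mathbf{A}$ to be irreducible, which for $\textbf{R}(G)^2$ holds precisely when $G$ is connected \emph{and non-bipartite}; and the resulting identity $\textbf{R}(G)^2=-\mathbf{D}\textbf{R}_s(G^\sigma)^2\mathbf{D}$ does not obviously hand you a bipartition or a contradiction. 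In this paper (as in \cite{SS}) Lemma~\ref{lem2.3} is reserved for the harder statement, Theorem~\ref{th2.5}, where it is applied to the off-diagonal block $\mathbf{X}$ of an already-known-to-be-bipartite graph, not to $\textbf{R}(G)$ or $\textbf{R}_s(G^\sigma)$ themselves. Replace your necessity paragraph with the spectral symmetry argument above.
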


\begin{proof}
(Necessity) If $G$ is bipartite, then there is a labeling such that
the Randi\'c matrix of $G$ is of the form
$$\mathbf{R}(G)=\left( {\begin{array}{*{20}{c}}
\mathbf{0}&\mathbf{X}\\
{{\mathbf{X}^T}}&\mathbf{0}
\end{array}} \right).$$
Let $\sigma$ be the orientation such that the skew Randi\'c matrix
of $G^\sigma$ is of the form
$${\textbf{R}_s}(G^\sigma)=\left( {\begin{array}{*{20}{c}}
\mathbf{0}&\mathbf{X}\\
{-{\mathbf{X}^T}}&\mathbf{0}
\end{array}} \right).$$
By Lemma \ref{lem2.1}, $Sp_{\textbf{R}_s}(G^\sigma)=\mathbf{i}Sp_{\textbf{R}}(G)$.

(Sufficiency) Suppose that
$Sp_{\textbf{R}_s}(G^\sigma)=\mathbf{i}Sp_{\textbf{R}}(G)$, for some
orientation $\sigma$. Since ${\textbf{R}_s}(G^\sigma)$ is a real
skew symmetric matrix, $Sp_{\textbf{R}_s}(G^\sigma)$ has only pure
imaginary eigenvalues and so is symmetric about the real axis. Then
$Sp_{\textbf{R}}(G)=-\mathbf{i}Sp_{\textbf{R}_s}(G^\sigma)$ is
symmetric about the imaginary axis. Hence $G$ is bipartite.
 \end{proof}

Trees are special bipartite graphs, actually, we will prove that $G$ is a tree
if and only if for any orientation $\sigma$,
$Sp_{\textbf{R}_s}(G^\sigma)=\mathbf{i}Sp_{\textbf{R}}(G)$.
The next  lemma plays an important role in the proof of the above statement.

\begin{lemma}\label{lem2.4}
Let $ \mathbf{X}=\left( {\begin{array}{*{20}{c}}
\mathbf{C}&*\\
{{*}}&*
\end{array}} \right)$ be a nonnegative matrix, where
$\mathbf{C}=(c_{ij})$ is a $k \times k$ $(k > 2)$ matrix whose
nonzero entries are $c_{i,i-1}$ and $c_{i,i}$ with the subscripts
modulo $k$,  for $1\leq i\leq k$. Let $\mathbf{Y}$ be obtained from
$ \mathbf{X}$ by changing the $(1,1)$ entry to $-c_{1,1}$. If
$\mathbf{X}^T\mathbf{X}$ is irreducible then
$\rho(\mathbf{X}^T\mathbf{X}) >\rho(\mathbf{Y}^T\mathbf{Y})$.
\end{lemma}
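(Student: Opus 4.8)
The plan is to bound each spectral radius by the squared operator norm of the relevant matrix, obtain the non-strict inequality for free from $|\mathbf{Y}|=\mathbf{X}$, and then use the cyclic bidiagonal shape of $\mathbf{C}$ to rule out equality. Put $\mathbf{A}=\mathbf{X}^{T}\mathbf{X}$ and $\mathbf{B}=\mathbf{Y}^{T}\mathbf{Y}$; both are positive semidefinite, so $\rho(\mathbf{A})=\|\mathbf{X}\|_{2}^{2}$ and $\rho(\mathbf{B})=\|\mathbf{Y}\|_{2}^{2}$. Since $\mathbf{X}\ge 0$ and $\mathbf{Y}$ differs from $\mathbf{X}$ only in the sign of the $(1,1)$ entry we have $|\mathbf{Y}|=\mathbf{X}$, whence $\|\mathbf{Y}v\|\le\|\,\mathbf{X}|v|\,\|\le\|\mathbf{X}\|_{2}$ for every unit vector $v$, and therefore $\rho(\mathbf{B})\le\rho(\mathbf{A})$ unconditionally. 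The whole content of the lemma is that this is strict, so that is where I would spend the effort.

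I would argue by contradiction: suppose $\rho(\mathbf{A})=\rho(\mathbf{B})$. Then $\mathbf{A}$ is irreducible nonnegative, $\mathbf{B}$ is positive semidefinite with $|\mathbf{B}|\le|\mathbf{Y}|^{T}|\mathbf{Y}|=\mathbf{A}$, and the two spectral radii agree, so Lemma~\ref{lem2.3} applies and gives a diagonal sign matrix $\mathbf{D}=\mathrm{diag}(\varepsilon_{1},\varepsilon_{2},\dots)$, $\varepsilon_{i}\in\{\pm 1\}$, with $\mathbf{A}=\mathbf{D}\mathbf{B}\mathbf{D}$, i.e. $(\mathbf{X}^{T}\mathbf{X})_{ij}=\varepsilon_{i}\varepsilon_{j}(\mathbf{Y}^{T}\mathbf{Y})_{ij}$ for all $i,j$. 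Because $\mathbf{Y}$ agrees with $\mathbf{X}$ off the entry $(1,1)$, a one-line computation shows $(\mathbf{Y}^{T}\mathbf{Y})_{ij}=(\mathbf{X}^{T}\mathbf{X})_{ij}$ whenever $i,j\ne 1$, and $(\mathbf{Y}^{T}\mathbf{Y})_{1j}=(\mathbf{X}^{T}\mathbf{X})_{1j}-2X_{11}X_{1j}$ for $j\ne 1$. From here I would read off sign constraints from the observation that the columns of $\mathbf{C}$ overlap exactly along the cycle $1\to 2\to\cdots\to k\to 1$.

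Concretely: for $3\le i\le k$ the columns $i-1$ and $i$ of $\mathbf{C}$ both carry a nonzero entry in row $i$, namely $c_{i,i-1}$ and $c_{i,i}$, so $(\mathbf{X}^{T}\mathbf{X})_{i-1,i}>0$, and since $i-1,i\ne 1$ the identity $\mathbf{A}=\mathbf{D}\mathbf{B}\mathbf{D}$ forces $\varepsilon_{i-1}=\varepsilon_{i}$. Columns $1$ and $2$ overlap in row $2$, and $X_{1,2}=c_{1,2}=0$ because $c_{1,2}$ is not one of the designated nonzero entries of $\mathbf{C}$ when $k>2$; hence $(\mathbf{Y}^{T}\mathbf{Y})_{1,2}=(\mathbf{X}^{T}\mathbf{X})_{1,2}>0$, which forces $\varepsilon_{1}=\varepsilon_{2}$. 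Chaining these gives $\varepsilon_{1}=\varepsilon_{2}=\cdots=\varepsilon_{k}$. Finally columns $1$ and $k$ overlap in row $1$, where the entries are $c_{1,1}$ and $c_{1,0}=c_{1,k}$, both nonzero, so $X_{11}X_{1k}=c_{1,1}c_{1,k}>0$; then $(\mathbf{X}^{T}\mathbf{X})_{1,k}=\varepsilon_{1}\varepsilon_{k}(\mathbf{Y}^{T}\mathbf{Y})_{1,k}=(\mathbf{Y}^{T}\mathbf{Y})_{1,k}=(\mathbf{X}^{T}\mathbf{X})_{1,k}-2c_{1,1}c_{1,k}$ yields $c_{1,1}c_{1,k}=0$, a contradiction. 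Hence $\rho(\mathbf{A})>\rho(\mathbf{B})$. Note that $k>2$ enters precisely in the step $\varepsilon_{1}=\varepsilon_{2}$, to guarantee $c_{1,2}=0$ so that $(\mathbf{Y}^{T}\mathbf{Y})_{1,2}$ retains the sign of $(\mathbf{X}^{T}\mathbf{X})_{1,2}$.

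The main obstacle I anticipate is the equality analysis, and in particular using the irreducibility hypothesis exactly where it is indispensable: the clean conclusion $\mathbf{A}=\mathbf{D}\mathbf{B}\mathbf{D}$ rests on $\mathbf{A}=\mathbf{X}^{T}\mathbf{X}$ being irreducible (this is what makes Perron--Frobenius available inside Lemma~\ref{lem2.3}), and without it the signs cannot be transported around the cycle. If one prefers to avoid Lemma~\ref{lem2.3}, the same conclusion can be reached by a self-contained singular-vector argument: take a unit top right-singular vector $v$ of $\mathbf{Y}$; equality $\rho(\mathbf{B})=\|\mathbf{Y}v\|^{2}\le\|\,\mathbf{X}|v|\,\|^{2}\le\rho(\mathbf{A})$ forces $|v|$ to be the (strictly positive) Perron eigenvector of $\mathbf{X}^{T}\mathbf{X}$, and the coordinatewise equality $|(\mathbf{Y}v)_{i}|=\sum_{j}|Y_{ij}|\,|v_{j}|$ forces all nonzero products $Y_{ij}v_{j}$ in each row $i$ to share one sign; pushing this through rows $2,\dots,k$ gives $\mathrm{sign}(v_{1})=\cdots=\mathrm{sign}(v_{k})$, while row $1$, with its entries $-c_{1,1}$ and $c_{1,k}$, forces $\mathrm{sign}(v_{1})=-\mathrm{sign}(v_{k})$ --- the same contradiction.
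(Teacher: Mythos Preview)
Your proof is correct and follows essentially the same route as the paper: obtain $\rho(\mathbf{Y}^{T}\mathbf{Y})\le\rho(\mathbf{X}^{T}\mathbf{X})$ from $|\mathbf{Y}^{T}\mathbf{Y}|\le\mathbf{X}^{T}\mathbf{X}$, assume equality, invoke Lemma~\ref{lem2.3} to get a signature conjugation $\mathbf{A}=\mathbf{D}\mathbf{B}\mathbf{D}$, and then chase the signs $\varepsilon_{1}=\cdots=\varepsilon_{k}$ around the cycle via the entries $(\mathbf{X}^{T}\mathbf{X})_{i,i+1}$ before reaching the contradiction at $(1,k)$. Your write-up is in fact a bit more explicit than the paper's (you spell out why $c_{1,2}=0$ needs $k>2$ and isolate the $i=1$ step), and your closing singular-vector variant is a legitimate alternative that bypasses Lemma~\ref{lem2.3}, but the core argument is the same.
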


\begin{proof}
Note that $|\mathbf{Y}^T\mathbf{Y}| \le \mathbf{X}^T\mathbf{X}$
(entry-wise), and so $\rho(\mathbf{X}^T\mathbf{X})
\ge\rho(\mathbf{Y}^T\mathbf{Y})$ by Perron- Frobenius theory
\cite{HJ}. Now suppose that $\rho(\mathbf{X}^T\mathbf{X})
=\rho(\mathbf{Y}^T\mathbf{Y})$. Since $\mathbf{X}^T\mathbf{X} $ is
irreducible, by Lemma \ref{lem2.3}, there exists a signature matrix
$\mathbf{D} = Diag(d_1, d_2, \ldots , d_n)$ such that
$\mathbf{X}^T\mathbf{X}= \mathbf{D}\mathbf{Y}^T\mathbf{Y}\mathbf{D}$
. Therefore $[\mathbf{X}^T\mathbf{X}]_{ij} = d_id_j
[\mathbf{Y}^T\mathbf{Y}]_{ij }$ for all $i$,  $j$.  Now, for $i = 1,
\cdots , k - 1$,
$[\mathbf{X}^T\mathbf{X}]_{i,i+1}=[\mathbf{Y}^T\mathbf{Y}]_{i,i+1}\neq
0$. Using $[\mathbf{X}^T\mathbf{X}]_{ij} = d_id_j
[\mathbf{Y}^T\mathbf{Y}]_{ij }$ , we have $d_id_{i+1} = 1$ for $i =
1, \cdots , k-1$. Hence $d_1d_k = 1$. On the other hand, let
$[\mathbf{X}^T\mathbf{X}]_{1k} =c_{1,1}c_{1,k}+M$, and so we have
$-c_{1,1} c_{1,k}+M=d_1d_k[\mathbf{Y}^T\mathbf{Y}]_{1k
}=[\mathbf{X}^T\mathbf{X}]_{1k} =c_{1,1} c_{1,k}+M$, which is
impossible.
\end{proof}

\begin{theorem}\label{th2.5}
 $G$ is a tree
if and only if for any orientation $\sigma$,
$Sp_{\textbf{R}_s}(G^\sigma)=\mathbf{i}Sp_{\textbf{R}}(G)$.
\end{theorem}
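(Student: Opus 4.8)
The plan is to prove the two directions separately. For the "only if" direction, suppose $G$ is a tree; then $G$ is bipartite, and for \emph{any} orientation $\sigma$ we want $Sp_{\textbf{R}_s}(G^\sigma)=\mathbf{i}Sp_{\textbf{R}}(G)$. The cleanest route is via switching: by Theorem \ref{th3.3} (equivalently Lemma \ref{th3.1} together with Lemma \ref{th3.2}), the skew Randi\'c spectrum of a directed tree does not depend on its orientation, so it suffices to exhibit \emph{one} orientation with the desired property. Since $G$ is bipartite, Theorem \ref{th2.2} supplies an orientation $\sigma_0$ with $Sp_{\textbf{R}_s}(G^{\sigma_0})=\mathbf{i}Sp_{\textbf{R}}(G)$; combining this with orientation-independence finishes this direction.

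For the "if" direction I would prove the contrapositive: if $G$ is not a tree, then either $G$ is disconnected (reduce to a component that is not a tree, since both spectra are unions over components) or $G$ contains a cycle, and in either case I will produce an orientation $\sigma$ for which $Sp_{\textbf{R}_s}(G^\sigma)\neq\mathbf{i}Sp_{\textbf{R}}(G)$. The key observation is that $Sp_{\textbf{R}_s}(G^\sigma)=\mathbf{i}Sp_{\textbf{R}}(G)$ is equivalent to $\textbf{R}_s(G^\sigma)^T\textbf{R}_s(G^\sigma)$ and $\textbf{R}(G)^2$ having the same spectrum (both matrices are real symmetric positive semidefinite, and the eigenvalues of $\mathbf{i}\textbf{R}_s$ are real while those of $\textbf{R}$ are real, so matching spectra up to the factor $\mathbf{i}$ is equivalent to matching the spectra of the squares). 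So I want an orientation $\sigma$ with $\rho\big(\textbf{R}_s(G^\sigma)^T\textbf{R}_s(G^\sigma)\big) \neq \rho\big(\textbf{R}(G)^2\big) = \rho(\textbf{R}(G))^2$. Since $|\textbf{R}_s(G^\sigma)| = \textbf{R}(G)$ entrywise, Perron--Frobenius already gives $\rho\big(\textbf{R}_s(G^\sigma)^T\textbf{R}_s(G^\sigma)\big)\le\rho(\textbf{R}(G))^2$; I need a $\sigma$ making this strict.

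The role of Lemma \ref{lem2.4} is exactly to provide that strict inequality. If $G$ has a cycle $C$ of length $k$, choose a labeling in which the vertices of $C$ are labeled $1,\dots,k$ consecutively along the cycle; orient all edges "from low label to high," except reverse the single edge between vertex $1$ and vertex $k$ (a switching-type modification at one vertex, matching the passage from $\mathbf{X}$ to $\mathbf{Y}$ in Lemma \ref{lem2.4}). Writing $\textbf{R}_s(G^\sigma) = \textbf{D}^{-1/2}\textbf{S}(G^\sigma)\textbf{D}^{-1/2}$ and comparing with the all-low-to-high orientation $\tau$, the matrix $\textbf{R}_s(G^\tau)$ plays the role of $\mathbf{X}$ (its relevant $k\times k$ block has the bidiagonal-plus-corner structure of $\mathbf{C}$ once one accounts for loops—here one should instead work with $\textbf{R}_s(G^\tau)^T\textbf{R}_s(G^\tau)$ directly, whose off-diagonal pattern around the cycle is the cyclic tridiagonal pattern the lemma needs, and which is irreducible because $C$ is a cycle) while $\textbf{R}_s(G^\sigma)$ plays the role of $\mathbf{Y}$. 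Lemma \ref{lem2.4} then yields $\rho(\textbf{R}_s(G^\sigma)^T\textbf{R}_s(G^\sigma)) < \rho(\textbf{R}_s(G^\tau)^T\textbf{R}_s(G^\tau)) = \rho(\textbf{R}(G))^2$, where the last equality holds since $G^\tau$ is switching-equivalent to the bipartite-type orientation and so (being a union of a bipartite-orientable structure, or directly by Theorem \ref{th4.1} applied on the tree part plus the cycle handled as in the necessity proof) realizes the full Randi\'c spectral radius. Hence $Sp_{\textbf{R}_s}(G^\sigma)\neq\mathbf{i}Sp_{\textbf{R}}(G)$, completing the contrapositive.

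The main obstacle I anticipate is the bookkeeping in the last step: making the identification between $\textbf{R}_s(G^\sigma)^T\textbf{R}_s(G^\sigma)$ and the matrices $\mathbf{X},\mathbf{Y}$ of Lemma \ref{lem2.4} precise when the cycle $C$ is a proper subgraph of $G$ (so there are extra rows/columns, the "$*$" blocks), and verifying the irreducibility hypothesis $\mathbf{X}^T\mathbf{X}$ irreducible—this should follow from connectedness of $G$, after reducing to a connected non-tree, but it needs a careful argument that the cyclic block is not decoupled from the rest. A secondary subtlety is confirming that reversing exactly one cycle edge genuinely changes the graph from "evenly oriented cycle" to "oddly oriented cycle" (or vice versa), which is what ultimately breaks the spectral identity; this is where the parity in the definition of evenly/oddly oriented cycles enters, and one must make sure the chosen $\tau$ is on the side that attains $\rho(\textbf{R}(G))^2$.
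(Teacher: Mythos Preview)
Your necessity argument is exactly the paper's: combine Theorem~\ref{th3.3} (orientation independence for trees) with Theorem~\ref{th2.2} (bipartite graphs admit one good orientation). That direction is fine.

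The sufficiency direction has a genuine gap in how Lemma~\ref{lem2.4} is invoked. The lemma requires a \emph{nonnegative} matrix $\mathbf{X}$ whose top-left $k\times k$ block has its nonzero entries precisely at positions $c_{i,i}$ and $c_{i,i-1}$ (indices mod~$k$). The matrix $\mathbf{R}_s(G^\tau)$ cannot play this role: it is skew-symmetric, so its diagonal is zero and half its off-diagonal entries are negative. Likewise your comparison matrix $\mathbf{R}_s(G^\sigma)$, obtained by reversing one cycle arc, does not arise from $\mathbf{R}_s(G^\tau)$ by negating a single diagonal entry. So neither the nonnegativity hypothesis nor the structural hypothesis of Lemma~\ref{lem2.4} is met, and the inequality $\rho(\mathbf{R}_s(G^\sigma)^T\mathbf{R}_s(G^\sigma))<\rho(\mathbf{R}_s(G^\tau)^T\mathbf{R}_s(G^\tau))$ is not established. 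Your final equality $\rho(\mathbf{R}_s(G^\tau)^T\mathbf{R}_s(G^\tau))=\rho(\mathbf{R}(G))^2$ is also unjustified: the ``all low-to-high'' orientation on a cycle labeled $1,\dots,k$ consecutively is not the canonical bipartite orientation, and you have not shown it is switching-equivalent to one.

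The missing step is the one the paper takes first: use Theorem~\ref{th2.2} to conclude $G$ is bipartite, then pass to the bipartite block form $\mathbf{R}(G)=\left(\begin{smallmatrix}\mathbf{0}&\mathbf{X}\\ \mathbf{X}^T&\mathbf{0}\end{smallmatrix}\right)$. Now $\mathbf{X}$ \emph{is} nonnegative, and an even cycle $C_{2k}$ in $G$ produces exactly the $c_{i,i},\,c_{i,i-1}$ pattern in a $k\times k$ block of $\mathbf{X}$ (rows indexed by one part, columns by the other). Negating the $(1,1)$ entry of $\mathbf{X}$ yields $\mathbf{Y}$ and a legitimate orientation $\sigma$ with $\mathbf{R}_s(G^\sigma)=\left(\begin{smallmatrix}\mathbf{0}&\mathbf{Y}\\ -\mathbf{Y}^T&\mathbf{0}\end{smallmatrix}\right)$; connectedness of $G$ makes $\mathbf{X}^T\mathbf{X}$ irreducible, and Lemma~\ref{lem2.4} gives $\rho(\mathbf{X}^T\mathbf{X})>\rho(\mathbf{Y}^T\mathbf{Y})$, i.e.\ $\mathbf{X}$ and $\mathbf{Y}$ have different singular values, contradicting the hypothesis. (Note also that the theorem tacitly assumes $G$ connected---otherwise any disconnected forest is a counterexample---so your case ``$G$ disconnected, reduce to a non-tree component'' is both unnecessary and incomplete.)
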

\begin{proof}
(Necessity) The necessity follows directly from Theorem \ref{th3.3}
and Theorem \ref{th2.2}.

(Sufficiency) Suppose that
$Sp_{\textbf{R}_s}(G^\sigma)=\mathbf{i}Sp_{\textbf{R}}(G)$, for any
orientation $\sigma$. By Theorem \ref{th2.2}, $G$ is a bipartite
graph. So there is a labeling such that the Randi\'c matrix of $G$
is of the form
$$\mathbf{R}(G)=\left( {\begin{array}{*{20}{c}}
\mathbf{0}&\mathbf{X}\\
{{\mathbf{X}^T}}&\mathbf{0}
\end{array}} \right),$$
where $\mathbf{X}$ is a nonnegative matrix. Since $G$ is connected, $\mathbf{X}^T\mathbf{X}$ is indeed
a positive matrix and so irreducible. Now assume that $G$ is not a tree. Then $G$
has at least an even cycle because $G$ is bipartite. W.L.O.G. $\mathbf{X}$ has the form $\left( {\begin{array}{*{20}{c}}
\mathbf{C}&*\\
{{*}}&*
\end{array}} \right)$
where $\mathbf{C}=(c_{ij})$ is a $k \times k$ $(k > 2)$ matrix whose
nonzero entries are $c_{i,i-1}$ and $c_{i,i}$ with the subscripts
modulo $k$,  for $1\leq i\leq k$. Let $\mathbf{Y}$ be obtained from
$ \mathbf{X}$ by changing the $(1,1)$ entry to $-c_{1,1}$.  Consider
the orientation $\sigma$ of $G$ such that
$${\textbf{R}_s}(G^\sigma)=\left( {\begin{array}{*{20}{c}}
\mathbf{0}&\mathbf{Y}\\
{-{\mathbf{Y}^T}}&\mathbf{0}
\end{array}} \right).$$

By hypothesis,
$Sp_{\textbf{R}_s}(G^\sigma)=\mathbf{i}Sp_{\textbf{R}}(G)$
and hence $\mathbf{X}$ and $\mathbf{Y}$ have the same singular values.
It follows that $\rho(\mathbf{X}^T\mathbf{X})=\rho(\mathbf{Y}^T\mathbf{Y})$,
which contradicts Lemma \ref{lem2.4}.
 \end{proof}

\begin{coro}\label{cor1}
$G$ is a forest
if and only if for any orientation $\sigma$,
$Sp_{\textbf{R}_s}(G^\sigma)=\mathbf{i}Sp_{\textbf{R}}(G)$.
 \end{coro}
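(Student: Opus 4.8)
The plan is to reduce Corollary~\ref{cor1} to Theorem~\ref{th2.5} by treating each connected component separately. First I would observe that if $G$ has components $G_1,\ldots,G_t$, then for any orientation $\sigma$ of $G$, the restriction $\sigma_j$ of $\sigma$ to $G_j$ is an orientation of $G_j$, and, after relabeling so that the vertices of each component are consecutive, $\mathbf{R}_s(G^\sigma)$ is the block-diagonal matrix $\mathrm{diag}(\mathbf{R}_s(G_1^{\sigma_1}),\ldots,\mathbf{R}_s(G_t^{\sigma_t}))$, while $\mathbf{R}(G)$ is block-diagonal with blocks $\mathbf{R}(G_j)$. Hence $Sp_{\textbf{R}_s}(G^\sigma)=\bigcup_{j=1}^t Sp_{\textbf{R}_s}(G_j^{\sigma_j})$ and $Sp_{\textbf{R}}(G)=\bigcup_{j=1}^t Sp_{\textbf{R}}(G_j)$ as multisets, and likewise any orientation of $G$ is obtained by independently choosing orientations of the $G_j$.

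For the necessity direction, suppose $G$ is a forest. Then every component $G_j$ is a tree, so by Theorem~\ref{th2.5} we have $Sp_{\textbf{R}_s}(G_j^{\sigma_j})=\mathbf{i}\,Sp_{\textbf{R}}(G_j)$ for every orientation $\sigma_j$; taking the union over $j$ gives $Sp_{\textbf{R}_s}(G^\sigma)=\mathbf{i}\,Sp_{\textbf{R}}(G)$ for every orientation $\sigma$. For the sufficiency direction, suppose $Sp_{\textbf{R}_s}(G^\sigma)=\mathbf{i}\,Sp_{\textbf{R}}(G)$ for every orientation $\sigma$, and suppose for contradiction that some component, say $G_1$, is not a tree. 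By Theorem~\ref{th2.5} applied to $G_1$ there is an orientation $\sigma_1$ of $G_1$ with $Sp_{\textbf{R}_s}(G_1^{\sigma_1})\neq\mathbf{i}\,Sp_{\textbf{R}}(G_1)$. Extend this to an orientation $\sigma$ of $G$ by choosing any orientation on the remaining components; then comparing the multiset $Sp_{\textbf{R}_s}(G^\sigma)$ with $\mathbf{i}\,Sp_{\textbf{R}}(G)$ componentwise yields $Sp_{\textbf{R}_s}(G^\sigma)\neq\mathbf{i}\,Sp_{\textbf{R}}(G)$, contradicting the hypothesis.

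There is one subtlety worth flagging: passing from "the multisets differ on component $G_1$" to "the total multisets differ" is not completely automatic, since cancellation between components could in principle mask a discrepancy. However, because $\mathbf{R}_s(G_1^{\sigma_1})$ is skew-symmetric its spectrum is symmetric about the origin with the same shape as $\mathbf{i}\,Sp_{\textbf{R}}(G_1)$ (which is also symmetric, $G_1$ being bipartite by Theorem~\ref{th2.2} once we know $G$ must be bipartite — or we may simply invoke that trees handled in Theorem~\ref{th2.5} already force bipartiteness of the relevant component), so the discrepancy is a genuine difference in the list of absolute values, i.e. in the singular values. One then argues as in the proof of Theorem~\ref{th2.5}: the switching orientation changes the largest singular value of the relevant block, so $\rho(\mathbf{R}_s(G_1^{\sigma_1}))<\rho(\mathbf{R}_s(G_1^{\sigma_1'}))$ for the two orientations $\sigma_1,\sigma_1'$ appearing there, and since the other blocks are unchanged the spectral radius of the whole matrix strictly changes, so the two total multisets cannot both equal $\mathbf{i}\,Sp_{\textbf{R}}(G)$. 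The main obstacle is therefore not any deep new idea but this bookkeeping about multiset unions and making sure the strict inequality from Lemma~\ref{lem2.4} survives when embedded in the larger block-diagonal matrix; everything else is the block-diagonal reduction, which is routine.
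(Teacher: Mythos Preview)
Your necessity argument is correct and is exactly the paper's argument.

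For sufficiency, your overall strategy---fix the orientations on $G_2,\ldots,G_r$ and compare two orientations of $G$ that differ only on $G_1$---is sound and in fact slightly simpler than what the paper does. But the specific justification you give is wrong. You claim that because $\rho(\mathbf{R}_s(G_1^{\sigma_1}))<\rho(\mathbf{R}_s(G_1^{\sigma_1'}))$ and ``the other blocks are unchanged,'' the spectral radius of the whole block-diagonal matrix strictly changes. That is false: the spectral radius of a block-diagonal matrix is the maximum over the blocks, and here every nontrivial component has Randi\'c spectral radius exactly $1$ (this is the largest Randi\'c eigenvalue of any connected graph), so as long as $G$ has at least one other nontrivial component, flipping only $G_1$ leaves the total spectral radius at $1$.

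The correct way to close the gap is multiset subtraction, not spectral radius. With the orientations on $G_2,\ldots,G_r$ held fixed, both total spectra equal $\mathbf{i}\,Sp_{\mathbf R}(G)$ by hypothesis; subtracting the common contribution of $G_2,\ldots,G_r$ forces $Sp_{\mathbf{R}_s}(G_1^{\sigma_1})=Sp_{\mathbf{R}_s}(G_1^{\sigma_1'})$, which contradicts the strict inequality from Lemma~\ref{lem2.4} obtained inside the proof of Theorem~\ref{th2.5}. (Equivalently: varying $\sigma_1$ over all orientations shows every orientation of $G_1$ yields the same skew Randi\'c spectrum; since $G_1$ is bipartite---$G$ is bipartite by Theorem~\ref{th2.2}---one of these equals $\mathbf{i}\,Sp_{\mathbf R}(G_1)$, so all do, and Theorem~\ref{th2.5} then forces $G_1$ to be a tree.)

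For comparison, the paper proceeds differently: it flips the $(1,1)$ entry in \emph{every} component simultaneously, first uses multiset subtraction to peel off the tree components (whose singular values are unchanged), and then on the remaining non-tree blocks runs a $\max_j$ argument: since $\rho(\mathbf{X}_j^T\mathbf{X}_j)>\rho(\mathbf{Y}_j^T\mathbf{Y}_j)$ for each non-tree $j$, the maxima cannot coincide. Your one-component-at-a-time route avoids this max step entirely, but you must replace the faulty spectral-radius sentence with the multiset-subtraction argument above.
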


\begin{proof}
(Necessity) Let $G = G_1\cup \ldots \cup G_r$ where $G_j$'s are
trees. Then $G^\sigma = G_1^{\sigma_1}\cup \ldots \cup
G_r^{\sigma_r}$. By Theorem \ref{th2.5},
$Sp_{\textbf{R}_s}(G_j^{\sigma_j})=\mathbf{i}Sp_{\textbf{R}}(G_j)$
for all $j = 1, 2, \cdots , r$. Hence
$Sp_{\textbf{R}_s}(G^\sigma)=Sp_{\textbf{R}_s}(G_1^{\sigma_1}) \cup
\cdots\cup Sp_{\textbf{R}_s}(G_r^{\sigma_r})=
\mathbf{i}Sp_{\textbf{R}}(G_1)\cup \cdots\cup
\mathbf{i}Sp_{\textbf{R}}(G_r) = \mathbf{i}Sp_{\textbf{R}}(G_1\cup
\ldots \cup G_r) = \mathbf{i}Sp_{\textbf{R}}(G)$.

(Sufficiency) Suppose that $G$ is not a forest. Then $G = G_1\cup
\ldots \cup G_r$ where $G_1\ldots  G_t$ are connected, but not
trees, and $G_{t+1}\ldots  G_r$ are trees. By Theorem \ref{th2.2},
$G$ is a bipartite graph.  So there is a labeling such that the
Randi\'c matrix of $G$ is of the form
 $$\mathbf{R}(G)=\left( {\begin{array}{*{20}{c}}
\mathbf{0}&\mathbf{X}\\
{{\mathbf{X}^T}}&\mathbf{0}
\end{array}} \right),$$
where $\mathbf{X}=\mathbf{X_1}\bigoplus\cdots\bigoplus \mathbf{X_r}$.
Let $\mathbf{Y_j}$ be obtained from $ \mathbf{X_j}$ by changing
the $(1,1)$ entry to its negative.  Consider the orientation
$\sigma$ of $G$ such that
$${\textbf{R}_s}(G^\sigma)=\left( {\begin{array}{*{20}{c}}
\mathbf{0}&\mathbf{Y}\\
{-{\mathbf{Y}^T}}&\mathbf{0}
\end{array}} \right).$$
where $\mathbf{Y}=\mathbf{Y_1}\bigoplus\cdots\bigoplus
\mathbf{Y_r}$. By Lemma \ref{lem2.1},
$Sp_{\textbf{R}_s}(G^\sigma)=\mathbf{i}Sp_{\textbf{R}}(G)$ implies
that the singular values of $\mathbf{X}$ coincide with the singular
values of $\mathbf{Y}$. Since $G_{t+1}\ldots  G_r$ are trees, the
singular values of $\mathbf{X_j}$ coincide with the singular values
of $\mathbf{Y_j}$ for $j = t + 1,\cdots , r$. Hence the singular
values of $\mathbf{X_1}\bigoplus\cdots\bigoplus \mathbf{X_t}$
coincide with the singular values of
$\mathbf{Y_1}\bigoplus\cdots\bigoplus \mathbf{Y_t}$. Since
$G_1\ldots  G_t$ are not trees, we have
$\rho(\mathbf{X_j}^T\mathbf{X_j})>\rho(\mathbf{Y_j}^T\mathbf{Y_j})$
for $j = 1,\cdots, t$. Consequently, for some $j_0$,
$$\mathop {\max }\limits_{1 \le j \le t}\rho(\mathbf{X_j}^T\mathbf{X_j})=\mathop {\max }\limits_{1 \le j \le t} \rho(\mathbf{Y_j}^T\mathbf{Y_j})=
\rho(\mathbf{Y_{j_0}}^T\mathbf{Y_{j_0}})<\rho(\mathbf{X_{j_0}}^T\mathbf{X_{j_0}})\le\mathop {\max }\limits_{1 \le j \le t}\rho(\mathbf{X_j}^T\mathbf{X_j}),$$
a contradiction.
 \end{proof}

Let $\sigma$ be an orientation of $G$. From Theorem \ref{th2.2}, we
know that $Sp_{\textbf{R}_s}(G^\sigma)=\mathbf{i}Sp_{\textbf{R}}(G)$
only if $G$ is bipartite. We concentrate on the orientation $\sigma$
of bipartite graph $G$ so that
$Sp_{\textbf{R}_s}(G^\sigma)=\mathbf{i}Sp_{\textbf{R}}(G)$ in the
sequel. Let the characteristic polynomials of $\textbf{R}(G)$ and
$\textbf{R}_s(G^\sigma)$  be expressed as in \eqref{1} and
\eqref{2}, respectively.
$Sp_{\textbf{R}_s}(G^\sigma)=\mathbf{i}Sp_{\textbf{R}}(G)$ if and
only if
$\varphi_R(G,\lambda)=\sum_{i=0}^na_i\lambda^{n-i}=\lambda^{n-2r}\prod_{i=1}^r
(\lambda^2- \lambda_i^2)$ and
$\varphi_{R_s}(G^\sigma,\lambda)=\sum_{i=0}^n
c_i\lambda^{n-i}=\lambda^{n-2r}\prod_{i=1}^r
(\lambda^2+\lambda_i^2)$ for some non-zero real numbers
$\lambda_1,\lambda_2,\cdots, \lambda_r$. Hence, we have that
$Sp_{\textbf{R}_s}(G^\sigma)=\mathbf{i}Sp_{\textbf{R}}(G)$  if and
only if
\begin{equation}\label{eq1}
  a_{2i}=(-1)^ic_{2i}, \  a_{2i+1}=c_{2i+1}=0,
\end{equation}
for $i=0,1,\cdots,\lfloor\frac{n}{2}\rfloor.$

An even cycle $C_{2l}$ is said to be \emph{oriented uniformly} if
$C_{2l}$ is oddly (resp.,evenly) oriented relative to $G^\sigma$
when $l$ is odd (resp., even). If every even cycle in $G^\sigma$  is
oriented uniformly, then the orientation $\sigma$ is defined to be a
\emph{parity-linked} orientation of $G$.

\begin{theorem}\label{th6.1}
Let $G$ be a bipartite graph and $\sigma$ be an orientation of $G$.
Then $Sp_{\textbf{R}_s}(G^\sigma)=\mathbf{i}Sp_{\textbf{R}}(G)$  if
and only if $\sigma$ is  a parity-linked orientation of $G$.
 \end{theorem}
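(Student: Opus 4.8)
The plan is to compare the two characteristic polynomials coefficient by coefficient using the criterion \eqref{eq1}. Since $G$ is bipartite all of its cycles are even, so $\mathcal{L}_{2i}(G)=\mathcal{EL}_{2i}(G)$ and the odd-index coefficients vanish automatically on both sides; hence by \eqref{eq1} the condition $Sp_{\textbf{R}_s}(G^\sigma)=\mathbf{i}Sp_{\textbf{R}}(G)$ is equivalent to $a_{2i}=(-1)^ic_{2i}$ for every $i$. I would then try to match the combinatorial expressions \eqref{ai} and \eqref{ci} term by term over $L\in\mathcal{L}_{2i}=\mathcal{EL}_{2i}$.

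The key step is a sign bookkeeping for a fixed $L\in\mathcal{L}_{2i}$ consisting of $p_1(L)$ edges together with even cycles $C_1,\dots,C_s$ of lengths $2l_1,\dots,2l_s$, so that $p_2(L)=s=p_e(L)+p_o(L)$. Counting vertices gives $i=p_1(L)+\sum_{j=1}^s l_j$, whence $(-1)^i=(-1)^{p_1(L)}(-1)^{\sum_j l_j}$. The contribution of $L$ to $a_{2i}$ is $(-1)^{p_1(L)}(-2)^{s}W(L)$, while its contribution to $(-1)^ic_{2i}$ is $(-1)^{p_1(L)}(-1)^{\sum_j l_j}(-2)^{p_e(L)}2^{p_o(L)}W(L)$; since $(-2)^{p_e(L)}2^{p_o(L)}=(-2)^{s}(-1)^{p_o(L)}$, these two contributions are equal if and only if $(-1)^{p_o(L)}=(-1)^{\sum_j l_j}=(-1)^{\#\{j:\, l_j\text{ odd}\}}$. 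Now if $\sigma$ is parity-linked, then each $C_j$ is oddly oriented exactly when $l_j$ is odd, so $p_o(L)=\#\{j:\,l_j\text{ odd}\}$ and the term-by-term equality holds for every $L$; summing over $\mathcal{L}_{2i}$ yields $a_{2i}=(-1)^ic_{2i}$ for all $i$, proving the ``if'' direction.

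For the ``only if'' direction I would argue by contraposition. Assume $\sigma$ is not parity-linked and let $2i_0$ be the smallest length of an even cycle of $G$ violating the parity-linked condition (call such cycles \emph{bad}, the others \emph{good}). Take any $L\in\mathcal{L}_{2i_0}$: if every cycle in $L$ is good then, by the computation above, the contributions of $L$ to $a_{2i_0}$ and to $(-1)^{i_0}c_{2i_0}$ coincide; otherwise $L$ contains a bad cycle $C$, which by minimality has length $\ge 2i_0$, and since $C\subseteq L$ spans at most $2i_0$ vertices this forces $L=C$, a single bad cycle of length $2i_0$. For such an $L=C$ one checks directly (using $p_1(C)=0$, $p_2(C)=1$, and that $C$ being bad forces, after the $(-1)^{i_0}$ twist, the opposite sign) that the contribution to $a_{2i_0}$ is $-2W(C)$ whereas the contribution to $(-1)^{i_0}c_{2i_0}$ is $+2W(C)$. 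Hence all the discrepancies survive with the same sign and $a_{2i_0}-(-1)^{i_0}c_{2i_0}=-4\sum_C W(C)$, the sum being over all bad cycles of length $2i_0$; since $W(C)>0$ this is strictly negative, so \eqref{eq1} fails and $Sp_{\textbf{R}_s}(G^\sigma)\ne\mathbf{i}Sp_{\textbf{R}}(G)$. The one delicate point is precisely this no-cancellation argument in the converse, which is why the \emph{minimal} bad cycle is chosen: passing to length $2i_0$ reduces every ``bad'' linear subgraph of that order to a single cycle, after which the surviving signs are forced and cannot cancel.
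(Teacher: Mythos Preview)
Your proposal is correct and follows essentially the same route as the paper's proof: both reduce to the coefficient criterion \eqref{eq1}, do the same sign bookkeeping via $i=p_1(L)+\sum_j l_j$ to match contributions of each linear subgraph in the sufficiency direction, and for necessity take a shortest non-uniformly oriented cycle so that at that level every ``bad'' linear subgraph is forced to be a single bad cycle, whose contribution carries a fixed sign and therefore cannot cancel. Your write-up is in fact a bit more explicit than the paper's about why no cancellation can occur (you compute $a_{2i_0}-(-1)^{i_0}c_{2i_0}=-4\sum_C W(C)<0$), but the underlying argument is the same.
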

\begin{proof} Since $G$ is bipartite, all cycles in $G$ are even
and all linear subgraphs are even. Then $a_{2i+1}= 0$ for all $i$.

(Sufficiency) Since every even cycle is oriented uniformly, for
every cycle $C_{2l}$ with length $2l$, $C_{2l}$ is evenly oriented
relative to $G^\sigma$ if and only if $l$ is even. Thus
$(-1)^{p_e(C_{2l})} = (-1)^{l+1}$.

By Eqs \eqref{ai} and \eqref{ci}, we have
\begin{eqnarray*}
  (-1)^ia_{2i} &=& \sum_{L\in \mathcal{M}_{2i}}W(L) +\sum_{L\in \mathcal{CL}_{2i}}(-1)^{p_1(L)+i}(-2)^{p_2(L)}W(L),\\
  c_{2i} &=& \sum_{L\in \mathcal{M}_{2i}}W(L)+\sum_{L\in \mathcal{CL}_{2i}}(-2)^{p_e(L)}2^{p_o(L)}W(L),
\end{eqnarray*}
where $\mathcal{M}_{2i}$ is the set of matchings with $i$ edges and
$\mathcal{CL}_{2i}$ is the set of all linear subgraphs with $2i$
vertices of $G$ and containing at least one cycle.

For a linear subgraph $L \in \mathcal{CL}_{2i}$ of $G$, assume that
$L$ contains the cycles $C_{2l_1},C_{2l_2},\cdots,C_{2l_{p_2}}$,
then the number of components of $L$ that are single edges is
$p_1(L)=i-\sum_{j=1}^{p_2(L)}l_j$. Hence
$(-1)^{p_1(L)+i}=(-1)^{\sum_{j=1}^{p_2(L)}l_j}$.

Therefore, for a linear subgraph  $L\in \mathcal{CL}_{2i}$,
$$(-2)^{p_e(L)}2^{p_o(L)}=(-1)^{p_e(L)}2^{p_2(L)}=
(-1)^{\sum_{j=1}^{p_2}(l_j+1)}2^{p_2(L)}=(-1)^{p_1(L)+i}(-2)^{p_2(L)}.$$
 Thus $(-1)^ia_{2i}=c_{2i}$, and the sufficiency is proved.

(Necessity) If there is an even cycle of $G$ that is not oriented
uniformly in $G^\sigma$, then choose a shortest cycle $C_{2l}$ with
length $2l$ such that $C_{2l}$ is not oriented uniformly, that is,
$C_{2l}$ is oddly oriented in $G^\sigma$  if $l$ is even, and evenly
oriented if $l$ is odd. Let $\mathcal{C}_{2l}$ be  the set of cycles
with length $2l$ such that they are not oriented uniformly, and let
$\mathcal{UCL}_{2l}$ denote the set of all even linear subgraphs
with $2l$ vertices of $G$ and all even cycles that are oriented
uniformly. Thus, we have

\begin{eqnarray*}
(-1)^la_{2l} &=& \sum_{L\in \mathcal{M}_{2l}}W(L)+\sum_{L\in \mathcal{C}_{2l}}(-1)^l(-2)W(L)+\sum_{L\in \mathcal{UCL}_{2l}}(-1)^{p_1(L)+l}(-2)^{p_2(L)}W(L),\\
c_{2l} &=& \sum_{L\in \mathcal{M}_{2l}}W(L)+\sum_{L\in
\mathcal{C}_{2l}}(-1)^l2W(L)+\sum_{L\in
\mathcal{UCL}_{2l}}(-2)^{p_e(L)}2^{p_o(L)}W(L).
\end{eqnarray*}
By the choice of $C_{2l}$ and the proof of the necessity, we have that $$\sum_{L\in \mathcal{UCL}_{2l}}(-1)^{p_1(L)+l}(-2)^{p_2(L)}W(L)=\sum_{L\in \mathcal{UCL}_{2l}}(-2)^{p_e(L)}2^{p_o(L)}W(L).$$
However, $$\sum_{L\in \mathcal{C}_{2l}}(-1)^l(-2)W(L)\neq \sum_{L\in \mathcal{C}_{2l}}(-1)^l2W(L).$$
Thus $(-1)^la_{2l}\neq c_{2l}$. This contradicts $Sp_{\textbf{R}_s}(G^\sigma)=\mathbf{i}Sp_{\textbf{R}}(G)$ by Eq. \eqref{eq1}.
\end{proof}

Let $G$ be a bipartite graph with the bipartition $V (G) = X \cup
Y$. We call an orientation $\sigma$ of $G$ the \emph{canonical}
orientation if it assigns each edge of $G$ the direction from $X$ to
$Y$.
 For the canonical orientation $\sigma$ of $G(X, Y )$, from the proof of Theorem \ref{th2.2}, we have that
\begin{equation}\label{eq3}
Sp_{\textbf{R}_s}(G^\sigma)=\mathbf{i}Sp_{\textbf{R}}(G).
\end{equation}

\begin{theorem} Suppose $\tau$ is an orientation of a bipartite graph $G =
G(X, Y )$. Then
$Sp_{\textbf{R}_s}(G^\tau)=\mathbf{i}Sp_{\textbf{R}}(G)$ if and only
if $G^\tau$ is switching-equivalent to $G^\sigma$, where $\sigma$ is
the canonical orientation of $G$.
\end{theorem}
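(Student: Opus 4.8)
The plan is to reduce the statement to Theorem~\ref{th6.1}: since we already know that $Sp_{\textbf{R}_s}(G^\tau)=\mathbf{i}Sp_{\textbf{R}}(G)$ holds if and only if $\tau$ is a parity-linked orientation of $G$, it suffices to prove that $\tau$ is parity-linked \emph{if and only if} $G^\tau$ is switching-equivalent to $G^\sigma$, the canonical orientation. First I would establish the ``if'' direction. By Eq.~\eqref{eq3} the canonical orientation $\sigma$ satisfies $Sp_{\textbf{R}_s}(G^\sigma)=\mathbf{i}Sp_{\textbf{R}}(G)$, hence by Theorem~\ref{th6.1} $\sigma$ is parity-linked. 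So it is enough to check that being parity-linked is a switching invariant. A switching with respect to a vertex subset $W$ reverses the orientation of every edge with exactly one endpoint in $W$; for a fixed even cycle $C_{2l}$, the number of its edges lying in the $(W,\overline{W})$ cut is even (a cycle crosses any vertex partition an even number of times), so the parity of the number of edges of $C_{2l}$ oriented along a fixed routing is unchanged. Thus ``oddly/evenly oriented'' is preserved for every even cycle, so the parity-linked property is preserved under switching, and $G^\tau$ switching-equivalent to $G^\sigma$ forces $\tau$ parity-linked, whence $Sp_{\textbf{R}_s}(G^\tau)=\mathbf{i}Sp_{\textbf{R}}(G)$.

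For the ``only if'' direction, assume $Sp_{\textbf{R}_s}(G^\tau)=\mathbf{i}Sp_{\textbf{R}}(G)$, so by Theorem~\ref{th6.1} $\tau$ is parity-linked. I want to produce a subset $W\subseteq V(G)$ such that switching $\sigma$ (the canonical orientation) with respect to $W$ yields $\tau$. The natural approach is to work component by component (switching equivalence decomposes over connected components), so assume $G$ is connected. Fix a spanning tree $T$ of $G$. Since on a tree the skew Randi\'c energy — and in fact the whole orientation up to switching — is rigid (Lemma~\ref{th3.2} / Theorem~\ref{th3.3}), there is a unique switching class of orientations on $T$; in particular there is a vertex set $W$ so that switching $\sigma$ with respect to $W$ agrees with $\tau$ on every edge of $T$. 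It remains to show this same switching makes $\sigma$ and $\tau$ agree on the non-tree edges as well. For a non-tree edge $e=xy$, the fundamental cycle $C_e$ it forms with $T$ is an even cycle (as $G$ is bipartite). Both $\sigma$-after-switching and $\tau$ are parity-linked (the former because switching preserves parity-linkedness and $\sigma$ is parity-linked; the latter by hypothesis), so $C_e$ has the same parity of forward-oriented edges in both orientations. Since the two orientations already agree on the $2l-1$ tree edges of $C_e$, they must also agree on the remaining edge $e$. Hence $\sigma$ switched by $W$ equals $\tau$ on all edges, i.e. $G^\tau$ is switching-equivalent to $G^\sigma$.

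The main obstacle I expect is the bookkeeping in the ``only if'' direction: one must argue carefully that a single global switching set $W$ simultaneously fixes \emph{all} tree edges, and then that parity-linkedness is exactly the obstruction class that controls the non-tree edges. The cycle-space argument is clean in outline — the fundamental cycles of $T$ generate the cycle space, a switching acts trivially on the ``parity of forward edges'' functional on each cycle, and parity-linkedness pins down that functional — but making the induction on fundamental cycles (or, equivalently, the $\mathbb{F}_2$-linear-algebra statement that the edge-orientation data modulo switching is determined by its values on a cycle basis) fully rigorous is where the real work lies. A secondary point to handle is the reduction from general $G$ to connected $G$, and then observing that since $\sigma$ restricted to each component is again canonical for that component's bipartition, the per-component switching sets can be taken jointly.
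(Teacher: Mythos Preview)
Your proposal is correct and takes a genuinely different route from the paper's proof.

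For the sufficiency, the paper simply invokes Theorem~\ref{th4.1} (switching-equivalent orientations have identical skew Randi\'c spectrum) together with Eq.~\eqref{eq3}; you instead give a direct combinatorial argument that parity-linkedness is a switching invariant (a cycle meets any vertex cut in an even number of edges) and then apply Theorem~\ref{th6.1}. Both are valid; the paper's is shorter, yours is more informative since it exposes \emph{why} the spectral condition survives switching.

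For the necessity, the approaches diverge more substantially. The paper argues by induction on the number of edges: delete an edge $e$, apply the inductive hypothesis to $G-e$ to obtain a switching set $U$, and then check that this same $U$ (possibly enlarged by one component when $e$ is a cut edge) also fixes $e$; the non-cut-edge case is handled by observing that a disagreement on $e$ would make the cycle through $e$ fail to be oriented uniformly. Your argument is global rather than inductive: pick a spanning tree $T$, use Lemma~\ref{th3.2} to produce a single switching set $W$ aligning $\tau$ with $\sigma$ on all of $T$, and then use parity-linkedness of both orientations on each fundamental cycle to force agreement on every non-tree edge. This is cleaner in that it avoids the cut-edge/non-cut-edge case split and makes transparent the underlying $\mathbb{F}_2$-cycle-space picture (orientation modulo switching is determined by its parity on a cycle basis, and parity-linkedness fixes those parities). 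The paper's approach, on the other hand, is slightly more self-contained in that it does not need to invoke a spanning tree or cycle-space language. Your remark that the ``bookkeeping'' for the spanning-tree step is the main obstacle is accurate but mild: once one notes that on a tree every orientation is switching-equivalent to every other (which is exactly what Lemma~\ref{th3.2} gives), the rest is a one-line parity count per fundamental cycle.
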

\begin{proof}
The sufficiency can be easily obtained from Theorem \ref{th4.1} and Eq. \eqref{eq3}.

We prove the necessity  by induction on the number of edges $m$ of
the bipartite graph $G$ in the following. The result is trivial for
$m = 1$. Assume that the result is true for all bipartite graphs
with at most $m - 1$ $(m >2)$ arcs. Let $G$ be a bipartite graph
with $m$ edges and $(X, Y )$ be the bipartition of the vertex set of
$G$. Suppose that $\tau$ is an orientation of $G$ such that
$Sp_{\textbf{R}_s}(G^\tau)=\mathbf{i}Sp_{\textbf{R}}(G)$. We have to
prove that $\tau$ is switching-equivalent to $\sigma$. Let $e$ be
any edge of $G$. By Theorem \ref{th6.1} every even cycle is oriented
uniformly in $G^{\tau}$ and hence in $(G-e)^{\tau}$. Consequently,
$\tau_e$ is a parity-linked orientation of $G-e$, where $\tau_e$ is
the restriction of $ \tau$ to the graph $G-e$. So by Theorem
\ref{th6.1},
$Sp_{\textbf{R}_s}((G-e)^{\tau_e})=\mathbf{i}Sp_{\textbf{R}}(G-e)$.

Consequently, by induction hypothesis, $(G-e)^{\tau_e}$ is
switching-equivalent to $(G-e)^{\sigma_e}$, where $\sigma_e$ is the
restriction of $\sigma$ to the graph $G-e$. Let $\alpha$ be the
switch that takes $(G-e)^{\tau_e}$ to $(G-e)^{\sigma_e}$ effected by
the subset $U$ of $V(G-e)=V(G)$. We claim that $\alpha$ takes $\tau$
to $\sigma$ in $G$. If not, then the resulting oriented graph
$G^{\tau'}$ will be of the following type (see Fig.1): all the arcs
of $G-e$ will be oriented from one partite set (say, $X$) to the
other (namely, $Y$ ) while the arc $e$ will be oriented in the
reverse direction, that is, from $Y$ to $X$.
\begin{figure}[h,t,b,p]
\begin{center}
\scalebox{0.8}[0.8]{\includegraphics{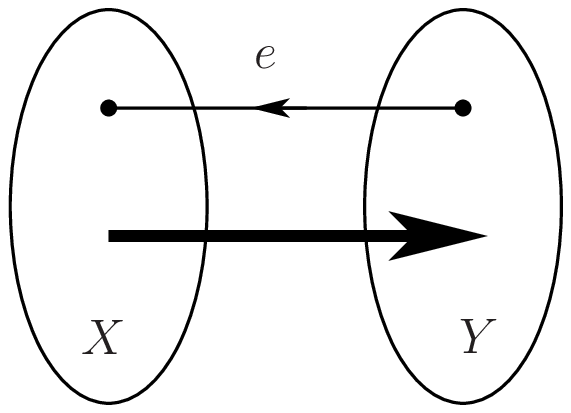}}\\[12pt]
Figure~1. the  oriented graph $G^{\tau'}$
\end{center}
\end{figure}

Consider first the case when $e$ is a cut edge of $G$. The subgraph
$G-e$ will then consist of two components with vertex sets, say,
$S_1$ and $S_2$. Now switch with respect to $S_1$. This will change
the orientation of the only arc $e$ and the resulting orientation is
$\sigma$. Consequently, $\tau$ is switching-equivalent to $\sigma$.

Note that the above argument also takes care of the case when $G$ is
a tree since each edge of $G$ will then be a cut edge. Hence we now
assume that $G$ contains an even cycle $C_{2k}$ containing the arc
$e$. But then any such $C_{2k}$ has $k-1$ arcs in one direction and
$k+1$ arcs in the opposite direction, thereby not oriented
uniformly. Hence this case can not arise. Consequently, $\tau$ is
switching-equivalent to $\sigma$  in $G$.
\end{proof}


\begin{thebibliography}{10}

\bibitem{Adiga} C. Adiga, R. Balakrishnan, Wasin So, The skew
energy of a digraph, {\it Linear Algebra  Appl.} {\bf432} (2010), 1825--1835.



\bibitem{BE}
B. Bollob\'{a}s, P. Erd\H{o}s, Graphs of extremal weights, {\it Ars Combin.}  {\bf 50} (1998), 225--233.

\bibitem{Bondy}
J.A. Bondy,  U.S.R. Murty,  {\it Graph Theory}, Springer, New York,
2008.

\bibitem{CM}
L.H. Clark, J.W. Moon, On the general Randi\' c index for certain families of trees, {\it Ars Combin.}  {\bf 54} (2000), 223--235.

\bibitem{DM}
D.M. Cvetkovi\'{c}, M. Doob, H. Sachs, {\it Spectra of Graphs--Theory and Application},
Academic Press, New York, 1980.



\bibitem{Gutman}
 I. Gutman, B. Furtula, \c{S}.B. Bozkurt, On Randi\' c energy,  {\it Linear
Algebra Appl.} {\bf442} (2014), 50--57.


\bibitem{GHL}
R. Gu, F. Huang, X. Li,
General Randi\' c matrix and general Randi\' c energy,
{\it Trans. Combin.} {\bf3} (2014) 21--33.


\bibitem{GHL2}
R. Gu, F. Huang, X. Li,
Randi\' c incidence energy of graphs,
{\it Trans. Combin.} {\bf3}
(2014) 1--9.

\bibitem{GX}
 S. Gong, G. Xu, The characteristic polynomial and the matchings polynomial of a weighted oriented graph,
{\it Linear
Algebra Appl.} {\bf436} (2012), 3597--3607.

\bibitem{HJ}
R. Horn, C. Johnson, {\it Matrix Analysis}, Cambridge University Press, 1987.

\bibitem{LW}
X. Li, J. Wang,  Randi\'c energy and Randi\'c eigenvalues, {\it MATCH Commun.
Math. Comput. Chem.} {\bf 73} (2015), 73--80.


\bibitem{LY}
X. Li, Y. Yang, Sharp bounds for the general Randi\'c index, {\it MATCH Commun.
Math. Comput. Chem.} {\bf 51} (2004), 155--166.

\bibitem{LY2}
X. Li, Y. Yang, Best lower and upper bounds for the Randi\'c index $R_{-1}$ of chemical trees, {\it MATCH Commun.
Math. Comput. Chem.} {\bf 52} (2004), 147--156.

\bibitem{PSL}
Lj. Pavlovi\'c, M. Stojanvoi\'c, X. Li, More on the best upper bound for the Randi\'c index $R_{-1}$ of
trees, {\it MATCH Commun.
Math. Comput. Chem.} {\bf 60} (2008), 567--584.


\bibitem{Randic}
M. Randi\'c, On characterization of molecular branching, {\it  J.
Am. Chem. Soc.}  {\bf 97} (1975), 660--6615.

\bibitem{SS}
B. Shader, Wasin So, Skew spectra of oriented graphs, {\it   The
Electron. J. Combin.} {\bf 16} (2009), \#N32.


\end{thebibliography}
\end{document}